   \DeclareMathOperator{\Id}{Id}
   \DeclareMathOperator{\e}{e}
   \DeclareMathOperator{\Lip}{Lip}
   \DeclareMathOperator{\sgn}{sgn}
   \theoremstyle{plain}
      \newtheorem{theorem}{Theorem}[section]
      \newtheorem{lemma}[theorem]{Lemma}
      \newtheorem{corollary}[theorem]{Corollary}
   \theoremstyle{definition}
      \newtheorem{example}[theorem]{Example}
      \newtheorem{remarks}[theorem]{Remarks}
   \newcommand{\R}{\mathds{R}}
   \newcommand{\cB}{\mathcal{B}_M}
   \newcommand{\cX}{\mathcal{A}}
   \newcommand{\cC}{\mathcal{C}_{M,N}}
   \newcommand{\cV}{\mathcal{V}}
   \newcommand{\ga}{\mathfrak{a}}
   \newcommand{\gb}{\mathfrak{b}}
   \newcommand{\gc}{\mathfrak{c}}
   \newcommand{\gd}{\mathfrak{d}}
   \newcommand{\nabcd}{\prts{\ga,\gb,\gc,\gd}-}
   \newcommand{\feps}{\mathfrak{\eps}}
   \newcommand{\epsa}{\feps_\ga}
   \newcommand{\epsb}{\feps_\gb}
   \newcommand{\epsc}{\feps_\gc}
   \newcommand{\epsd}{\feps_\gd}
   \newcommand{\epsi}{\feps_i}
   \newcommand{\lb}{label}
   \newcommand{\lm}{leftmargin}
   \newcommand{\ts}{topsep}
\renewcommand{\phi}{\varphi}
\newcommand{\eps}{\varepsilon}
\newcommand{\lbd}{\lambda}
\newcommand{\omg}{\omega}
\newcommand{\sgm}{\sigma}
\newcommand{\bxi}{\,\overline{\!\xi}}
\newcommand{\Lipfr}{\Lip\prts{f_r}}
\newcommand{\Lipft}{\Lip\prts{f_t}}
\newcommand{\prts}[1]{\left(#1\right)}
\newcommand{\prtsr}[1]{\left[#1\right]}
\newcommand{\abs}[1]{\left|#1\right|}
\newcommand{\absp}{\abs{p}}
\newcommand{\norm}[1]{\left\|#1\right\|}
\newcommand{\set}[1]{\left\{#1\right\}}
\newcommand{\setm}[1]{\setminus\set{#1}}
\newcommand{\maxs}[1]{\max\set{#1}}
\newcommand{\mins}[1]{\min\set{#1}}
\newcommand{\sups}[1]{\sup\set{#1}}
\newcommand{\pfrac}[2]{\prts{\dfrac{#1}{#2}}}
\newcommand{\prfrac}[2]{\prtsr{\dfrac{#1}{#2}}}
\newcommand{\dint}{\displaystyle\int}
\newcommand{\dlim}{\displaystyle\lim}
\newcommand{\dr}{\, dr}
\newcommand{\dtau}{\, d\tau}
\renewcommand{\ge}{\geqslant}
\renewcommand{\le}{\leqslant}
\renewcommand{\subset}{\subseteq}
\begin{document}
\title
   [Global Lipschitz Invariant Center Manifolds for ODE\lowercase{s} ...]%
   {Global Lipschitz Invariant Center Manifolds for ODE\lowercase{s} with Generalized Trichotomies}
\author[Ant\'onio J. G. Bento]{Ant\'onio J. G. Bento}
   \address{
      Ant\'onio J. G. Bento\\
      Departamento de Matem\'atica\\
      Universidade da Beira Interior\\
      6201-001 Covilh\~a\\
      Portugal}
   \address{
      Centro de Matem\'atica e Aplica\c{c}\~oes (CMA-UBI)\\
      Universidade da Beira Interior\\
      6201-001 Covilh\~a\\
      Portugal}
   \email{bento@ubi.pt}
\author[Cristina Tom\'as da Costa]{Cristina Tom\'as da Costa}
\address{
   Cristina Tom\'as da Costa\\
   \'Area Cient\'ifica de Matem\'atica\\
   Instituto Polit\'ecnico de Viseu\\
   Campus Polit\'ecnico Repeses\\
   3504-510 Viseu\\
   Portugal}
\email{ccosta@mat.estv.ipv.pt}
\date{\today}
\subjclass[2010]{34C45; 34D09; 37D10}
\keywords{Generalized trichotomies; Lipschitz invariant center manifolds; Stability theory}
\begin{abstract}
   In a Banach space, assuming that a linear nonautonomous differential equation $v'=A(t)v$ admits a very general type of trichotomy, we establish conditions for the existence of global Lipschitz invariant center manifold of the perturbed equation $v'=A(t)v+f(t,v)$. Our results not only improve results already existing in the literature, as well include new cases.
\end{abstract}
\maketitle
\section{Introduction}
Let $X$ be a Banach space and let $B(X)$ be the Banach algebra of all bounded linear operators acting on $X$. In this paper we are going to study the existence of global Lipschitz invariant center manifolds for differential equations of the type
\begin{equation*}
   v' = A(t) v + f(t,v),
\end{equation*}
where $A \colon \R \to B(X)$ is continuous, the perturbation $f \colon \R \times X \to X$ is a continuous function such that $f(t,0) = 0$ for every $t \in \R$, the function $f_t \colon X \to X$ given by $f_t (v) = f(t,v)$ is Lipschitz for every $t \in \R$ and the linear differential equation
\begin{equation*}
   v' = A(t) v
\end{equation*}
admits a very general type of trichotomy.

Center manifolds are a powerful tool in the study of stability and in the study of bifurcations because in many cases allow the reduction of the dimension of the state space (see Carr~\cite{Carr-book-1981}, Henry~\cite{Henry-book-1981}, Guckenheimer and Holmes~\cite{Guckenheimer_Holmes-book-1990}, Hale and Ko\c cak~\cite{Hale_Kocak-book-1991} and Haragus and Iooss~\cite{Haragus_Iooss-book-2011}). The first results on the existence of center manifolds were obtained by Pliss~\cite{Pliss-IANSSM-1964} in 1964 and by Kelley~\cite{Kelley-JMAA-1967,Kelley-JDE-1967} in 1967. After that many authors studied the problem and proved results about center manifolds. A good expository paper for the case of autonomous differential equations in finite dimension was written by Vanderbauwhede~\cite{Vanderbauwhede-DR-1989} (see also Vanderbauwhede and Gils~\cite{Vanderbauwhede_Gils-JFA-1987}) and for the case of autonomous differential equations in infinite dimension we recommend Vanderbauwhede and Iooss~\cite{Vanderbauwhede_Iooss-DR-1992}. For more details in the finite dimensional case see Chow, Liu and Yi~\cite{Chow_Liu_Yi-TAMS-2000,Chow_Liu_Yi-JDE-2000} and for the infinite dimensional case see Sijbrand~\cite{Sijbrand-TAMS-1985}, Mielke~\cite{Mielke-JDE-1986}, Chow and Lu~\cite{Chow_Lu-PRSE-1988,Chow_Lu-JDE-1988} and Chicone and Latushkin~\cite{Chicone_Latushkin-JDE-1997}.

For nonautonomous differential equations the concept of exponential trichotomy is an important tool to obtain center manifolds theorems. This notion goes back to Sacker and Sell~\cite{Sacker_Sell-JDE-1976-22-(497-522)}, Aulbach~\cite{Aulbach-NATMA-1982} and Elaydi and Hajek~\cite{Elaydi_Hajek-JMAA-1988} and is inspired by the notion of exponential dichotomy that can be traced back to the work of Perron in~\cite{Perron-MZ-1929,Perron-MZ-1930}. However, as in the case of exponential dichotomies, the notion of exponential trichotomy is very demanding and several generalizations have appeared in the literature. Essentially we can find two ways of generalization: on one hand replace the exponential growth rates by nonexponential growth rates and on the other hand consider exponential trichotomies that also depend on the initial time and hence are nonuniform. Trichotomies with nonexponential growth rates have been introduced by Fenner and Pinto in~\cite{Fenner_Pinto-JMAA-1997} where the authors study the so called $(h,k)-$trichotomies and the nonuniform exponential trichotomies have been consider by Barreira and Valls in~\cite{Barreira_Valls-JMPA-2005,Barreira_Valls-ETDS-2006}.

Hence, it is natural to consider trichotomies that are both nonuniform and nonexponential. This was done by Barreira and Valls in~\cite{Barreira_Valls-CPAA-2010,Barreira_Valls-SPJMS-2011} where have been introduced the so-called $\rho-$nonuniform exponential trichotomies, but these trichotomies do not include as a particular case the $(h,k)-$tri\-cho\-to\-mies of Fenner and Pinto \cite{Fenner_Pinto-JMAA-1997}.

In this paper we are going to consider a very general type of trichotomies that includes as particular cases all the types of trichotomies mentioned above, as well new types of trichotomies. We only consider that the linear equation admits an invariant splitting in three invariant subspaces and the norms of the linear evolution operator composed with the three different projections are bounded by general functions that only depend on the initial and on the final time (see~\ref{eq:trich:D1},~\ref{eq:trich:D2} and~\ref{eq:trich:D3}). Despite of that we were able to obtain invariant manifolds provided that the Lipschitz constants of the perturbation are sufficiently small. Note that for dichotomies this has already been done in~\cite{Bento_Silva-BSM-2014} and in~\cite{Bento_Silva-PM-2016} for differential and for difference equations, respectively.

The proof of the main theorem of this paper is based in the so-called classical Lyapunov-Perron method (see \cite{Lyapunov-IJC-1892-1992, Perron-MZ-1929, Perron-MZ-1930}) that consists in the following:
\begin{enumerate}[\lb=$-$,\lm=5mm,\ts=0mm]
   \item the variation of constants formula that allows to relate the solutions of the linear equation with the solutions of the perturbed equation;
   \item the construction of a suitable space of functions that is a complete metric space;
   \item the construction of a suitable contraction on the complete metric space mentioned above;
   \item the application of Banach's fixed point theorem to the mentioned contraction gives a function that is the only fixed point of the contraction and whose graph is the invariant manifold.
\end{enumerate}
This method was used by many authors, namely~\cite{Barreira_Valls-ETDS-2006, Barreira_Valls-CPAA-2010, Bento_Silva-BSM-2014, Bento_Silva-PM-2016}.
However, in this paper we have introduced a novelty in the application of the method. In \cite{Barreira_Valls-ETDS-2006, Barreira_Valls-CPAA-2010, Bento_Silva-BSM-2014, Bento_Silva-PM-2016} are used two applications of the Banach's fixed point theorem, the first one to obtain the solutions of the perturbed equation along the stable/center direction and the other to obtain the solutions of the perturbed equation in the other directions. Here, with only one application of the Banach's fixed point theorem, we obtain the solutions of the perturbed equation in all directions.

As particular cases of our main result we improve the results obtained by Barreira and Valls \cite{Barreira_Valls-CPAA-2010, Barreira_Valls-ETDS-2006} for the so-called $\rho-$nonuniform exponential trichotomies and non\-u\-ni\-form exponential trichotomies, respectively. Moreover, we also obtain as particular cases new results for nonuniform $\nabcd$tricotomies and for $\mu-$non\-u\-ni\-form polynomial trichotomies, concepts that have been introduced for the first time in this paper.

The structure of the paper is as follows. In Section~\ref{sec:notation+preliminaries} we introduce the notation and preliminearies. The main theorem of the paper is stated in Section~\ref{sec:main-result} and in Section~\ref{sec:examples} we apply our main result to particular cases of trichotomies. Finally, in Section~\ref{sec:proof:thm|global}, we prove the main theorem.

\section{Notation and preliminaries}\label{sec:notation+preliminaries}

Let $X$ be a Banach space, let $B(X)$ be the Banach algebra of all bounded linear operators acting on $X$ and let $A \colon \R \to B(X)$ be a continuous map. Consider the linear differential equation
\begin{equation}\label{eq:v'=A(t)v}
   v'=A(t)v, \qquad v(s)=v_{s}
\end{equation}
with $s\in\R$ and $v_{s}\in X$. We are going to assume that~\eqref{eq:v'=A(t)v} has a global solution and denote by $T_{t,s}$ the linear evolution operator associated to equation~\eqref{eq:v'=A(t)v}, i.e.,
\begin{equation*}
   v(t) = T_{t,s}v(s)
\end{equation*}
for every $t$, $s \in \R$.

We say that~\eqref{eq:v'=A(t)v} admits an \textit{invariant splitting} if, for every $t \in \R$, there exist bounded projections $P_{t}$, $Q^+_{t}$, $Q^-_{t} \in B(X)$ such that
\begin{enumerate}[\lb=$\mathbf{(S\arabic*)}$,\lm=14mm]
   \item \label{eq:split:S1}
      $P_t + Q^+_t + Q^-_t = \Id$ for every $t \in \R$;
   \item \label{eq:split:S2}
      $P_t Q^+_t = 0$ for every $t \in \R$;
   \item \label{eq:split:S3}
      $P_t T_{t,s}=T_{t,s}P_{s}$ for every $t,s \in \R$;
   \item \label{eq:split:S4}
      $Q^+_t T_{t,s}=T_{t,s}Q^+_s$  for every $t,s \in \R$.
\end{enumerate}
From~\ref{eq:split:S1} and~\ref{eq:split:S2} we have
   $$ P_t Q^-_t = Q^+_t P_t = Q^-_t P_t = Q^+_t Q^-_t = Q^-_t Q^+_t = 0
      \ \ \text{ for every } t \in \R$$
and from~\ref{eq:split:S1},~\ref{eq:split:S3} and~\ref{eq:split:S4} it follows immediately that
   $$ Q^-_t T_{t,s}=T_{t,s} Q^-_s \ \ \text{ for every } t,s \in \R.$$

For each $t\in \R$, we define the linear subspaces $E_t = P_t (X)$, $F^+_t = Q^+_t (X)$ and $F^-_t=Q^-_t (X)$, and, as usual, we identify $E_t \times F^+_t \times F^-_t$ and $E_t \oplus F^+_t \oplus F^-_t = X$ as the same vector space.

Given functions $\alpha \colon \R^2 \to \R^+$, $\beta^+ \colon \R^2_\ge \to \R^+$ and $\beta^- \colon \R^2_\le \to \R^+$, where
   $$ \R^2_\ge = \set{(t,s) \in \R^2  \colon t\ge s}
      \ \ \ \ \text{ and } \ \ \ \
      \R^2_\le = \set{(t,s) \in \R^2  \colon t\le s},$$
and denoting $\alpha(t,s)$, $\beta^+(t,s)$ and $\beta^-(t,s)$ by $\alpha_{t,s}$, $\beta^+_{t,s}$ and $\beta^-_{t,s}$, respectively, we say that equation~\eqref{eq:v'=A(t)v} admits a \textit{generalized trichotomy} with bounds $\alpha=\prts{\alpha_{t,s}}_{\prts{t,s} \in \R^2}$, $\beta^+ =\prts{\beta^+_{t,s}}_{\prts{t,s} \in \R^2_\ge}$ and $\beta^- = \prts{\beta^-_{t,s}}_{\prts{t,s} \in \R^2_\le}$, or simply with bounds $\alpha_{t,s}$, $\beta^+_{t,s}$ and $\beta^-_{t,s}$, if it admits an invariant splitting such that
\begin{enumerate}[\lb=$\mathbf{(D\arabic*)}$,\lm=14mm]
   \item \label{eq:trich:D1}
      $\norm{T_{t,s}P_s} \le \alpha_{t,s}$ for every $(t,s) \in \R^2$;
   \item \label{eq:trich:D2}
      $\norm{T_{t,s} Q^+_s} \le \beta^+_{t,s}$ for every $(t,s) \in \R^2_\ge$;
   \item \label{eq:trich:D3}
      $\norm{T_{t,s} Q^-_s} \le \beta^-_{t,s}$ for every $(t,s) \in \R^2_\le$.
\end{enumerate}

\begin{example}
   Let
      $$ \ga, \gb, \gc, \gd \colon \R \to \ ]0,+\infty[$$
   be $C^1$ functions and let
      $$ \epsa, \epsb, \epsc, \epsd \colon \R \to [1, +\infty[$$
   be $C^1$ functions in $\R \setm{0}$ and with derivatives from the left and from the right at $t=0$. In $\R^4$, equipped with the maximum norm, consider the differential equation
   \begin{equation}\label{eq:v'=A(t)v:R^4}
      \begin{cases}
         u' = \prtsr{-\dfrac{\ga'(t)}{\ga(t)} + \epsa^*(t)} \, u,\\[4mm]
         v' = \prtsr{\dfrac{\gc'(t)}{\gc(t)}  + \epsc^*(t)} \, v,\\[4mm]
         w' = \prtsr{-\dfrac{\gd'(t)}{\gd(t)} + \epsd^*(t)} \, w,\\[4mm]
         z' = \prtsr{\dfrac{\gb'(t)}{\gb(t)}  + \epsb^*(t)} \, z.
      \end{cases}
   \end{equation}
   where
      $$ \epsi^*(t)
         =
         \begin{cases}
            \dfrac{\epsi'(t)}{\epsi(t)} \, \dfrac{\cos t-1}{2}
                  - \ln(\epsi(t)) \, \dfrac{\sin t}{2}
               & \text{ if } t \ne 0,\\
            0
               & \text{ if } t = 0,
         \end{cases}$$
   for $i=\ga, \gb, \gc, \gd$. The evolution operator of this equation is given by
      $$ T_{t,s}(u,v,w,z)  = \prts{U_{t,s}(u,v), V^+_{t,s}w, V^-_{t,s} z}$$
   where $U_{t,s} \colon \R^2 \to \R^2$ is defined by
      $$ U_{t,s}(u,v)
         = \prts{\dfrac{\ga(s)}{\ga(t)} \, \dfrac{\epsa(t)^{\prts{\cos t-1}/2}}
            {\epsa(s)^{\prts{\cos s-1}/2}} \, u,
         \dfrac{\gc(t)}{\gc(s)} \, \dfrac{\epsc(t)^{\prts{\cos t-1}/2}}
            {\epsc(s)^{\prts{\cos s-1}/2}} \, v}$$
   and $V^+_{t,s}, V^-_{t,s} \colon \R \to \R$ are defined by
      $$ V^+_{t,s} w
         = \dfrac{\gd(s)}{\gd(t)} \, \dfrac{\epsd(t)^{\prts{\cos t-1}/2}}
            {\epsd(s)^{\prts{\cos s-1}/2}} \, w
         \ \ \ \ \text{ and } \ \ \ \
         V^-_{t,s} z
         = \dfrac{\gb(t)}{\gb(s)} \, \dfrac{\epsb(t)^{\prts{\cos t-1}/2}}
            {\epsb(s)^{\prts{\cos s-1}/2}} \, \, z.$$

   Using the projections 
   $P_s(u,v,w,z) = (u,v,0,0)$, $Q^+_s(u,v,w,z)=(0,0,w,0)$ and $Q^-_s(u,v,w,z)=(0,0,0,z)$ we obtain for every $(t,s) \in \R^2$,
      $$ \norm{T_{t,s} Q^+_s}
         = \norm{V^+_{t,s}}
         = \dfrac{\gd(s)}{\gd(t)} \, \dfrac{\epsd(t)^{\prts{\cos t-1}/2}}
            {\epsd(s)^{\prts{\cos s-1}/2}}
         \le \dfrac{\gd(s)}{\gd(t)} \, \epsd(s)$$
   and
      $$ \norm{T_{t,s} Q^-_s}
         = \norm{V^-_{t,s}}
         = \dfrac{\gb(t)}{\gb(s)} \,
            \dfrac{\epsb(t)^{\prts{\cos t-1}/2}}{\epsb(s)^{\prts{\cos s-1}/2}}
         \le \dfrac{\gb(t)}{\gb(s)} \, \epsb(s).$$
   Morover, assuming that
   \begin{equation}\label{t>=s=>(ga(s)gc(s))/(ga(t)gc(t)...>=1}
      \dfrac{\ga(s) \gc(s)}{\ga(t) \gc(t)}
         \pfrac{\epsa(t)}{\epsc(t)}^{(\cos t - 1)/2}
         \pfrac{\epsc(s)}{\epsa(s)}^{(\cos s - 1)/2}
      \ge 1
      \ \text{ for every } (t,s) \in \R^2_\ge,
   \end{equation}
   we have
   \begin{align*}
      \norm{T_{t,s}P_s}
      & =
         \begin{cases}
            \dfrac{\ga(s)}{\ga(t)} \,
               \dfrac{\epsa(t)^{(\cos t - 1)/2}}{\epsa(s)^{(\cos s - 1)/2}}
               & \text{  for all } (t,s) \in \R^2_\ge,\\[3mm]
            \dfrac{\gc(t)}{\gc(s)} \,
               \dfrac{\epsc(t)^{(\cos t - 1)/2}}{\epsc(s)^{(\cos s - 1)/2}}
               & \text{  for all } (t,s) \in \R^2_\le,
         \end{cases}\\
      & \le
         \begin{cases}
            \dfrac{\ga(s)}{\ga(t)} \, \epsa(s)
               & \text{  for all } (t,s) \in \R^2_\ge,\\[3mm]
            \dfrac{\gc(t)}{\gc(s)} \, \epsc(s)
               & \text{  for all } (t,s) \in \R^2_\le.
         \end{cases}
   \end{align*}
   Therefore, if~\eqref{t>=s=>(ga(s)gc(s))/(ga(t)gc(t)...>=1} is satisfied, then equation~\eqref{eq:v'=A(t)v:R^4} has a generalized trichotomy  with bounds
   \begin{equation} \label{eq:alpha_t,s=a(s)/a(t).epsa(s)...}
      \begin{aligned}
            \alpha_{t,s}
         &
            =
            \begin{cases}
               \dfrac{\ga(s)}{\ga(t)}\,\epsa(s)\\[3mm]
               \mins{\epsa(s),\epsc(s)}\\[2mm]
               \dfrac{\gc(t)}{\gc(s)}\,\epsc(s)
            \end{cases}
            \!\!\!\!\!\!
         &&
            \!\!
            \begin{tabular}{l}
               for all $(t,s) \in \R^2_\ge$ with $t \ne s$,\\[4.5mm]
               for all $(t,s) \in \R^2$ with $t = s$,\\[4.5mm]
               for all $(t,s) \in \R^2_\le$ with $t \ne s$,
            \end{tabular}
         \\
            \beta^+_{t,s}
         &
            = \dfrac{\gd(s)}{\gd(t)}\,\epsd(s)
         &&
            \text{for all } (t,s) \in \R^2_\ge,
         \\
            \beta^-_{t,s}
         &
            = \dfrac{\gb(t)}{\gb(s)}\,\epsb(s)
         &&
            \text{for all } (t,s) \in \R^2_\le.
      \end{aligned}
   \end{equation}
   We call the trichotomies with bounds of this type \textit{nonuniform $\prts{\ga,\gb,\gc,\gd}-$tri\-cho\-to\-mies}.
\end{example}

\begin{example}
   Let $\rho \colon \R \to \R$ an odd increasing differentiable function such that
      $$ \dlim_{t \to +\infty} \rho(t) = +\infty.$$
   In~\eqref{eq:alpha_t,s=a(s)/a(t).epsa(s)...}, making
      $$ \ga(t) = \e^{-a \rho(t)}, \ \ \
         \gc(t) = \e^{-c \rho(t)}, \ \ \
         \gb(t) = \e^{-b \rho(t)} , \ \ \
         \gd(t) = \e^{-d \rho(t)}$$
   and
      $$ \epsa(t)
         = \epsb(t)
         =\epsc(t)
         = \epsd(t)
         = D \e^{\eps \abs{\rho(t)}},$$
   with $a, b, c, d, D, \eps \in \R$ such that $D \ge 1$ and $\eps \ge 0$, we get
   \begin{equation} \label{eq:alpha_t,s=De^[a(rho(t)-rho(s))+eps|rho(s)|]}
      \begin{split}
         \alpha_{t,s}
         & =
            \begin{cases}
               D \e^{a[\rho (t)-\rho (s)] + \eps|\rho(s)|}\\
               D \e^{c[\rho (s)-\rho (t)] + \eps|\rho(s)|}\\
            \end{cases}\\
         \beta^+_{t,s}
            & = D \e^{d [\rho(t) - \rho(s)] + \eps|\rho(s)|}\\
         \beta^-_{t,s}
            & = D \e^{b [\rho(s) - \rho(t)] + \eps|\rho(s)|}\\
      \end{split}\!\!\!\!
      \begin{split}
         \text{ for all } (t,s) \in \R^2_\ge,\\
         \text{ for all } (t,s) \in \R^2_\le,\\
         \text{ for all } (t,s) \in \R^2_\ge,\\
         \text{ for all } (t,s) \in \R^2_\le.
      \end{split}
   \end{equation}
   This bounds for the trichotomy were consider by Barreira and Valls in~\cite{Barreira_Valls-SPJMS-2011, Barreira_Valls-CPAA-2010} and are called \textit{$\rho-$nonuniform exponential trichotomies}. Note that in this case condition~\eqref{t>=s=>(ga(s)gc(s))/(ga(t)gc(t)...>=1} is equivalent to $ a+c \ge 0$.

   When $\rho(t) = t$ we obtain the \textit{nonuniform exponential trichotomies} consider by Barreira and Valls in~\cite{Barreira_Valls-ETDS-2006} and~\cite{Barreira_Valls-JMPA-2005} with the bounds of the form
   \begin{equation} \label{eq:alpha_t,s = D e^[a(t-s)+eps|s|]}
      \begin{split}
         \alpha_{t,s}
         & =
            \begin{cases}
               D \e^{a(t-s) + \eps|s|}\\
               D \e^{c(s-t) + \eps|s|}\\
            \end{cases}\\
         \beta^+_{t,s}
            & = D \e^{d(t-s) + \eps|s|}\\
         \beta^-_{t,s}
            & = D \e^{b(s-t) + \eps|s|}\\
      \end{split}\!\!\!\!
      \begin{split}
         \text{ for all } (t,s) \in \R^2_\ge,\\
         \text{ for all } (t,s) \in \R^2_\le,\\
         \text{ for all } (t,s) \in \R^2_\ge,\\
         \text{ for all } (t,s) \in \R^2_\le.
      \end{split}
   \end{equation}
\end{example}
\section{Main theorem}\label{sec:main-result}
Suppose that equation~\eqref{eq:v'=A(t)v}  admits a generalized trichotomy. Consider the initial value problem
\begin{equation}\label{eq:v'=A(t)v+f(t,v)}
   v'=A(t)v+f(t,v), \qquad v(s)=v_{s}
\end{equation}
where $f: \R\times X\rightarrow X $ is a continuous function such that
\begin{equation}\label{eq:f(t,0)=0}
   f(t,0)=0 \text{ for every } t \in \R
\end{equation}
and, for every $t \in \R$,
\begin{equation}\label{eq:f is Lip}
   \Lipft
   := \sups{\dfrac{\norm{f(t,x)-f(t,y)}}{\norm{x-y}}
      \colon x,y \in X, \ x \ne y}
   < +\infty,
\end{equation}
i.e., the function $f_t \colon X \to X$ given by $f_{t}(x)=f(t,x)$ is a Lipschitz function (in $x$). Clearly
\begin{equation}\label{eq:||f(t,x)-f(t,y)||<=...}
   \norm{f(t,x)-f(t,y)} \le \Lipft \norm{x-y}
\end{equation}
for every $x,y \in X$ and every $t \in \R$ and taking $y=0$ in the last inequality, and by~\eqref{eq:f(t,0)=0}, we have
\begin{equation}\label{eq:||f(t,x)||<=...}
   \norm{f(t,x)} \le \Lipft \norm{x}
\end{equation}
for every $x \in X$ and every $t \in \R$.

When~\eqref{eq:v'=A(t)v} admits a generalized trichotomy, we can write the only solution of~\eqref{eq:v'=A(t)v+f(t,v)} in the form
\begin{equation*}
   \prts{x(t,s,v_s),y^+(t,s,v_s),y^-(t,s,v_s)} \in E_t \times F^+_t  \times F^-_t,
\end{equation*}
where $v_{s}=(\xi,\eta^+,\eta^-)\in E_s \times F^+_s \times F^-_s$, and then solving problem~\eqref{eq:v'=A(t)v+f(t,v)} is equivalent to solve the following problem
\begin{align}
   & x(t)
      = T_{t,s}P_s \xi + \int^t_s T_{t,r}P_r f(r,x(r),y^+(r),y^-(r)) \dr
      \label{eq:x(t)=T_t,sP_s...}\\
   & y^+(t)
      = T_{t,s}Q^+_s \eta^+ + \int^t_s T_{t,r}Q^+_r f(r,x(r),y^+(r),y^-(r))\dr
      \label{eq:y^+(t)=T_t,sQ^+_s...}\\
   & y^-(t)
      = T_{t,s}Q^-_s \eta^- + \int^t_s T_{t,r}Q^-_r f(r,x(r),y^+(r),y^-(r))\dr
      \label{eq:y^-(t)=T_t,sQ^-_s...}
\end{align}
for every $t \in \R$.

For each $\tau \in \R$, we define the \textit{flow} by
\begin{equation}\label{eq:Psi(s,v_s)=...}
   \Psi_\tau (s,v_{s})
   = \prts{s+\tau ,x(s+\tau,s,v_{s}),y^+(s+\tau,s,v_{s}),y^-(s+\tau,s,v_{s})}.
\end{equation}

We are going to study the existence of invariant center manifolds for equation~\eqref{eq:v'=A(t)v+f(t,v)} when~\eqref{eq:v'=A(t)v} admits a generalized trichotomy. The invariant center manifolds that we are going to obtain are given as the graph of a function belonging to a certain function space that we define now.

Making
\begin{equation*}
   G = \bigcup_{t \in \R}\left\{t\right\}\times E_{t},
\end{equation*}
and choosing $N \in \ ]0,+\infty[$, we denote by $\cX_N $ the space of all continuous functions $\phi \colon G \to X $ such that
\begin{align}
   & \phi(t,0)= 0 \ \text{ for all } t \in \R;
      \label{eq:phi(t,0)=0}\\
   & \phi(t,\xi) \in  F^+_t \oplus F^-_t
      \ \text{ for all } (t,\xi) \in G;
      \label{eq:phi(t,xi)_in_F^+_t+F^-_t}\\
   & \sups{\dfrac{\norm{\phi(t,\xi) -\phi(t,\bxi)}}{\norm{\xi-\bxi}} \colon (t,\xi), (t,\bxi) \in G, \ \xi\ne\bxi} \le N.
      \label{eq:Lip(phi_t)<=N}
\end{align}

Note that from~\eqref{eq:Lip(phi_t)<=N} it follows immediately that
\begin{equation}\label{eq:||phi(t,xi)-phi(t,bxi)||<=...}
   \norm{\phi(t,\xi) -\phi(t,\bxi)}
   \le N \norm{\xi-\bxi} \text{ for all } (t,\xi), (t,\bxi) \in G
\end{equation}
 and making $\bxi=0$ in~\eqref{eq:||phi(t,xi)-phi(t,bxi)||<=...}, we have
\begin{equation}\label{eq:||phi(t,xi)||<=...}
   \norm{\phi(t,\xi)} \le N \norm{\xi} \text{ for every $(t,\xi) \in G$.}
\end{equation}

By~\eqref{eq:phi(t,xi)_in_F^+_t+F^-_t}, and identifying $F^+_t \oplus F^-_t$ and $F^+_t \times F^-_t$ as the same vector space,  we can write $\phi=\prts{\phi^+,\phi^-}$, where $\phi^+(t,\xi) = Q^+_t \phi(t,\xi) $ and $\phi^-(t,\xi) = Q^-_t \phi(t,\xi) $.

Given $\phi \in \cX_N$, we define the \textit{graph of $\phi$} as
\begin{equation}\label{def:V_phi}
   \begin{split}
      \cV_{\phi}
      & = \set{\prts{s, \xi, \phi(s,\xi)} \colon (s,\xi) \in G}\\
      & = \set{\prts{s, \xi, \phi^+(s,\xi) , \phi^-(s,\xi)}
         \colon (s,\xi) \in G}\\
      & \subseteq \R \times X.
   \end{split}
\end{equation}
The global Lipschitz invariant center manifolds that we intend to obtain are given as the graph of suitable functions $\phi$ belonging to some space $\cX_N$.

Before state the main theorem we need to define the following quantities:
\begin{equation}\label{def:sgm}
   \sgm
   := \sup_{(t,s) \in \R^2} \abs{\dint_s^t
      \dfrac{\alpha_{t,r} \Lipfr \alpha_{r,s}}{\alpha_{t,s}} \dr}
\end{equation}
and
\begin{equation}\label{def:omg}
   \omg
   := \sup_{s\in\R}
      \prtsr{\dint_{-\infty}^s \beta^+_{s,r} \Lipfr \alpha_{r,s} \dr
      + \dint_s^{+\infty}  \beta^-_{s,r} \Lipfr \alpha_{r,s} \dr}
\end{equation}
that are supposed to be finite.

\begin{theorem} \label{thm:global}
   Let $X$ be a Banach space. Suppose that~\eqref{eq:v'=A(t)v} admits a generalized trichotomy with bounds $\alpha_{t,s}$, $\beta^+_{t,s}$ and $\beta^-_{t,s}$ and let $ f \colon \R \times X \to X$ be a continuous function such that~\eqref{eq:f(t,0)=0} and~\eqref{eq:f is Lip} are satisfied. If
   \begin{equation} \label{eq:lim+oo b^- a = lim-oo b^+ a =0}
      \lim_{r \to +\infty} \beta^-_{s,r} \alpha_{r,s}
      = \lim_{r \to -\infty} \beta^+_{s,r} \alpha_{r,s}
      =0
      \ \ \text{ for every } s \in \R
   \end{equation}
   and
   \begin{equation*}
      2 \sgm + 2 \omg < 1,
   \end{equation*}
   where $\sgm$ and $\omg$ are given by~\eqref{def:sgm} and~\eqref{def:omg}, respectively, then there is $N \in \ ]0,1[$ and a unique $\phi \in \cX_N$ such that
   \begin{equation}\label{eq:psi_tau(V)_C=_V}
      \Psi_\tau(\cV_\phi)\subset \cV_\phi
   \end{equation}
   for every $\tau \in \R$, where $\Psi_\tau$ is given by~\eqref{eq:Psi(s,v_s)=...} and $\cV_\phi$ is given by~\eqref{def:V_phi}.

   Moreover,
      $$ \norm{\Psi_{t-s}(s,\xi,\phi(s,\xi))
            - \Psi_{t-s}(s,\bxi,\phi(s,\bxi))}
         \le \dfrac{N}{\omg} \, \alpha_{t,s} \, \norm{\xi-\bxi}$$
   for all $t \in \R$ and all $(s,\xi), (s,\bxi) \in G$.
\end{theorem}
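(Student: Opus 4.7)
I would prove Theorem~\ref{thm:global} by a Lyapunov--Perron argument in which, exploiting the novelty announced in the introduction, a \emph{single} Banach fixed point simultaneously produces the invariant function $\phi$ and the center-direction flow $x$. The first step is to rewrite the invariance condition $\Psi_\tau(\cV_\phi)\subset \cV_\phi$ as a coupled integral system. If a solution of~\eqref{eq:v'=A(t)v+f(t,v)} lies on $\cV_\phi$, then by~\eqref{eq:x(t)=T_t,sP_s...}--\eqref{eq:y^-(t)=T_t,sQ^-_s...} with $y^\pm(t)=\phi^\pm(t,x(t))$ and the cocycle property of $T_{\cdot,\cdot}$, one can express $\phi^\pm(s,\xi)$ in terms of $\phi^\pm(T,x(T))$ with $T\to \mp\infty$. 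Since $\norm{\phi^\pm(T,x(T))}\le N\norm{x(T)}$ and, in the space introduced below, $\norm{x(T)}\le M\alpha_{T,s}\norm{\xi}$, the vanishing condition~\eqref{eq:lim+oo b^- a = lim-oo b^+ a =0} kills the propagated terms and forces
$$\phi^+(s,\xi) = \dint_{-\infty}^s T_{s,r}Q^+_r\, f\prts{r,x(r,s,\xi),\phi(r,x(r,s,\xi))} \dr,$$
$$\phi^-(s,\xi) = -\dint_s^{+\infty} T_{s,r}Q^-_r\, f\prts{r,x(r,s,\xi),\phi(r,x(r,s,\xi))} \dr,$$
while $x(\cdot,s,\xi)$ continues to be determined by~\eqref{eq:x(t)=T_t,sP_s...} with $v=\prts{x,\phi(\cdot,x)}$.

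\textbf{Combined space and operator.} I would work in the space $\cC$ of pairs $(x,\phi)$ for which $\phi\in\cX_N$ and $x\colon\R\times G\to X$ is continuous with $x(s,s,\xi)=\xi$, $x(t,s,\xi)\in E_t$ and the weighted Lipschitz estimate
$$\norm{x(t,s,\xi)-x(t,s,\bxi)} \le M\,\alpha_{t,s}\,\norm{\xi-\bxi},$$
equipped with the natural weighted supremum metric that makes it a complete metric space. Define $\mathcal{T}(x,\phi)=(\tilde x,\tilde\phi)$ by the three formulas above. Using~\eqref{eq:||f(t,x)-f(t,y)||<=...} and $\phi\in\cX_N$ one gets
$$\norm{f\prts{r,x,\phi(r,x)}-f\prts{r,\bar x,\phi(r,\bar x)}} \le (1+N)\,\Lipfr\,\norm{x-\bar x},$$
which, combined with~\ref{eq:trich:D1}--\ref{eq:trich:D3} and the definitions~\eqref{def:sgm} and~\eqref{def:omg}, yields
$$\Lip\prts{\tilde x(t,s,\cdot)} \le \prtsr{1+(1+N)M\sgm}\alpha_{t,s}, \qquad \Lip\prts{\tilde\phi(s,\cdot)} \le (1+N)M\omg.$$
Setting $M=N/\prts{(1+N)\omg}$ automatically gives $(1+N)M\omg=N$, and the remaining self-map condition $1+(1+N)M\sgm\le M$ is solvable for some $N\in\,]0,1[$ precisely when $2\sgm+2\omg<1$. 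An analogous computation on differences of two inputs produces a contraction constant of the same order, strictly below $1$. This is the single Banach fixed point replacing the two nested ones of~\cite{Barreira_Valls-ETDS-2006,Barreira_Valls-CPAA-2010,Bento_Silva-BSM-2014,Bento_Silva-PM-2016}.

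\textbf{Conclusion and main obstacle.} Banach's theorem yields the unique fixed point $(x^*,\phi^*)\in\cC$. Uniqueness of the solution to~\eqref{eq:v'=A(t)v+f(t,v)} identifies $x^*(t,s,\xi)$ with the center component of $\Psi_{t-s}(s,\xi,\phi^*(s,\xi))$; a reparametrization of the integrals defining $\phi^{*\pm}$ at the shifted base point shows that $\phi^{*\pm}(t,x^*(t,s,\xi))$ equals the corresponding $y^\pm$ component of that flow, establishing~\eqref{eq:psi_tau(V)_C=_V}. The flow Lipschitz bound
$$\norm{\Psi_{t-s}(s,\xi,\phi^*(s,\xi))-\Psi_{t-s}(s,\bxi,\phi^*(s,\bxi))} \le (1+N)M\,\alpha_{t,s}\,\norm{\xi-\bxi} = \dfrac{N}{\omg}\,\alpha_{t,s}\,\norm{\xi-\bxi}$$
then follows from the triangle inequality together with the Lipschitz control on $x^*$ and $\phi^*$. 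I expect the main obstacle to be the balancing of $M$ and $N$ in the previous paragraph: the single hypothesis $2\sgm+2\omg<1$ must simultaneously force the self-map property and a contraction constant below $1$, which is why absorbing the Lipschitz slope of $\phi$ into that of $x$ via the coupling $M=N/\prts{(1+N)\omg}$ is crucial.
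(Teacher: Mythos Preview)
Your proposal is correct and follows essentially the same route as the paper: the paper works in the product space $\cC=\cB\times\cX_N$, defines the single operator $T(x,\phi)=(J(x,\phi),L(x,\phi))$ by exactly your three integral formulas, and chooses $M,N$ via the coupled relations $\sgm=\dfrac{M-1}{M(1+N)}$ and $\omg=\dfrac{N}{M(1+N)}$ (equivalent to your $M=N/((1+N)\omg)$ together with $1+(1+N)M\sgm=M$), proving in its Lemma~\ref{lema:M&N} that $2\sgm+2\omg<1$ yields $M\in\,]1,2[$ and $N\in\,]0,1[$. The contraction constant the paper obtains is $(\sgm+\omg)\max\{1+N,M\}<1$, and the final flow estimate $(1+N)M\,\alpha_{t,s}=\dfrac{N}{\omg}\,\alpha_{t,s}$ is exactly yours.
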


The proof of last theorem will be given in Section~\ref{sec:proof:thm|global}.

\section{Particular cases of the main theorem}\label{sec:examples}

Now we will apply the main result to nonuniform $\nabcd$trichtomies.

\begin{theorem} \label{thm:global:alpha_t,s=ga(s)/ga(t)eps_a(s)...}
   Let $X$ be a Banach space. Suppose that equation~\eqref{eq:v'=A(t)v} admits nonuniform $\nabcd$trichotomy and let $f \colon \R \times X \to X$ be a continuous function such that~\eqref{eq:f(t,0)=0} and~\eqref{eq:f is Lip} are satisfied and
   \begin{equation}\label{ine:Lip(fr)<delta.eps.min(...)}
      \Lip(f_r)
      \le \delta
         \mins{\dfrac{1}{\gc(r)\gd(r)\epsd(r)}\, \prfrac{\gc(r)\gd(r)}{\epsc(r)}',
         \dfrac{\ga(r)\gb(r)}{\epsb(r)} \,
         \prtsr{-\dfrac{1}{\ga(r)\gb(r)\epsa(r)}}', \gamma(r)}
   \end{equation}
   for every $r \in \R \setm{0}$, where $\delta < 1/6$ and $\gamma \colon \R \to \, ]0,+\infty[$ is a function such that
   \begin{equation}\label{ine:int gamma dr < 1}
      \maxs{\int_{-\infty}^{+\infty} \epsa(r) \gamma(r) \dr,
         \int_{-\infty}^{+\infty} \epsc(r) \gamma(r) \dr}
      \le 1.
   \end{equation}
   If
   \begin{equation}\label{eq:lim_c(r)d(r)eps(r) =...=0}
      \lim_{r \to -\infty} \gc(r) \gd(r) \epsd(r)
      = \lim_{r \to + \infty}  \dfrac{\epsb(r)}{\ga(r)\gb(r)} = 0,
   \end{equation}
   then equation~\eqref{eq:v'=A(t)v+f(t,v)} admits a global Lipschitz invariant center manifold, i.e., there is $N \in \ ]0,1[$ and a unique $\phi \in \cX_N$ such that
   \begin{equation*}
      \Psi_\tau(\cV_\phi)\subset \cV_\phi \ \ \text{ for every } \tau \in \R
   \end{equation*}
    and
   \begin{equation*}
      \norm{\Psi_{t-s}(p_{s,\xi}) - \Psi_{t-s}(p_{s,\bxi})}
      \le
      \begin{cases}
         \dfrac{N}{\omg} \, \dfrac{\ga(s)}{\ga(t)} \, \epsa(s) \, \norm{\xi-\bxi}
            \ \ \text{ if } t \ge s,\\[3mm]
         \dfrac{N}{\omg} \, \dfrac{\gc(t)}{\gc(s)} \, \epsc(s) \, \norm{\xi-\bxi}
            \ \ \text{ if } t \le s,
      \end{cases}
   \end{equation*}
   for all $(s,\xi), (s,\bxi) \in G$ and where $p_{s,\xi} = \prts{s,\xi,\phi(s,\xi)}$ and $p_{s,\bxi} = \prts{s,\bxi,\phi(s,\bxi)}$.
\end{theorem}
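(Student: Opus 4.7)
The plan is to realise this as a direct application of Theorem~\ref{thm:global}. The nonuniform $\nabcd$trichotomy has the explicit bounds~\eqref{eq:alpha_t,s=a(s)/a(t).epsa(s)...}, so the only work is to verify the two quantitative hypotheses of Theorem~\ref{thm:global}: the vanishing condition~\eqref{eq:lim+oo b^- a = lim-oo b^+ a =0} and the smallness estimate $2\sgm + 2\omg < 1$. Once these are established, the final piecewise Lipschitz estimate in the conclusion follows by substituting the two branches of $\alpha_{t,s}$ into the last inequality of Theorem~\ref{thm:global}.

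Verifying~\eqref{eq:lim+oo b^- a = lim-oo b^+ a =0} is immediate from the definitions: for $r \ge s$ one computes $\beta^-_{s,r}\alpha_{r,s} = \ga(s)\gb(s)\epsa(s)\epsb(r)/[\ga(r)\gb(r)]$, and for $r \le s$ one computes $\beta^+_{s,r}\alpha_{r,s} = \gc(r)\gd(r)\epsd(r)\epsc(s)/[\gc(s)\gd(s)]$, so hypothesis~\eqref{eq:lim_c(r)d(r)eps(r) =...=0} delivers exactly the vanishing required.

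For $\sgm$ the kernel collapses telescopically: for $t \ge r \ge s$ the ratio $\alpha_{t,r}\alpha_{r,s}/\alpha_{t,s}$ simplifies to $\epsa(r)$, and for $t \le r \le s$ to $\epsc(r)$. Combined with the third entry of~\eqref{ine:Lip(fr)<delta.eps.min(...)} and~\eqref{ine:int gamma dr < 1}, each branch gives an integral bounded by $\delta$, so $\sgm \le \delta$. For $\omg$, the idea is that the first two entries of the minimum in~\eqref{ine:Lip(fr)<delta.eps.min(...)} are engineered precisely so that, after multiplication by the trichotomy bounds, the integrand becomes (up to a multiplier depending only on $s$) a pure derivative in $r$. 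Using the first entry on $(-\infty,s]$ one integrates $\epsc(s)/[\gc(s)\gd(s)]$ times a derivative of $\gc(r)\gd(r)/\epsc(r)$, whose limit at $-\infty$ vanishes by~\eqref{eq:lim_c(r)d(r)eps(r) =...=0} together with $\epsc,\epsd \ge 1$; using the second entry on $[s,+\infty)$ one integrates $\ga(s)\gb(s)\epsa(s)$ times a derivative of $-1/[\ga(r)\gb(r)\epsa(r)]$, whose limit at $+\infty$ vanishes similarly from the second half of~\eqref{eq:lim_c(r)d(r)eps(r) =...=0}. The fundamental theorem of calculus produces $\delta$ in each case, so $\omg \le 2\delta$, and thus $2\sgm + 2\omg \le 6\delta < 1$.

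The main obstacle -- really the only non-routine bookkeeping -- is recognising the antiderivatives that make $\omg$ finite: the apparently opaque first two entries of the minimum in~\eqref{ine:Lip(fr)<delta.eps.min(...)} are exactly those that turn the kernels $\beta^+_{s,r}\Lip(f_r)\alpha_{r,s}$ and $\beta^-_{s,r}\Lip(f_r)\alpha_{r,s}$ into telescoping differences. Once~\eqref{eq:lim+oo b^- a = lim-oo b^+ a =0}, $\sgm \le \delta$, and $\omg \le 2\delta$ are in hand, Theorem~\ref{thm:global} supplies the desired $N \in \,]0,1[$ and a unique $\phi \in \cX_N$ with $\Psi_\tau(\cV_\phi) \subset \cV_\phi$, and the two-branch Lipschitz estimate in the conclusion is just the specialisation of the last inequality of Theorem~\ref{thm:global} according to whether $t \ge s$ or $t \le s$.
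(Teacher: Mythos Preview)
Your proposal is correct and follows essentially the same route as the paper: verify~\eqref{eq:lim+oo b^- a = lim-oo b^+ a =0} from~\eqref{eq:lim_c(r)d(r)eps(r) =...=0}, simplify $\alpha_{t,r}\alpha_{r,s}/\alpha_{t,s}$ to $\epsa(r)$ or $\epsc(r)$ and use the third entry of the minimum together with~\eqref{ine:int gamma dr < 1} to get $\sgm\le\delta$, then recognise the first two entries of the minimum as making the $\omg$-integrands into exact derivatives so that~\eqref{eq:lim_c(r)d(r)eps(r) =...=0} yields $\omg\le 2\delta$, whence $2\sgm+2\omg\le 6\delta<1$ and Theorem~\ref{thm:global} applies. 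Your remark that the vanishing of $\gc(r)\gd(r)/\epsc(r)$ and $1/[\ga(r)\gb(r)\epsa(r)]$ at the relevant infinities follows from~\eqref{eq:lim_c(r)d(r)eps(r) =...=0} via $\epsa,\epsb,\epsc,\epsd\ge 1$ is a detail the paper leaves implicit, but is exactly right.
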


\begin{proof}
   It is obvious that for this type of bounds~\eqref{eq:lim+oo b^- a = lim-oo b^+ a =0} is equivalent to~\eqref{eq:lim_c(r)d(r)eps(r) =...=0}. Moreover, from~\eqref{ine:Lip(fr)<delta.eps.min(...)} and~\eqref{ine:int gamma dr < 1} we have
   \begin{align*}
      \sgm
      & = \sup_{(t,s) \in \R^2}
         \abs{\dint_s^t \dfrac{\alpha_{t,r} \Lip (f_r) \alpha_{r,s}}{\alpha_{t,s}} \dr}\\
      & = \maxs{\sup_{(t,s) \in \R^2_\ge} \int_s^t \epsa(r) \Lip (f_r) \dr,
         \sup_{(t,s) \in \R^2_\le} \int_t^s \epsc(r) \Lip (f_r) \dr}\\
      & = \maxs{\int_{-\infty}^{+\infty} \epsa(r) \Lip (f_r) \dr,
         \int_{-\infty}^{+\infty} \epsc(r) \Lip (f_r) \dr}\\
      & \le \delta \maxs{\int_{-\infty}^{+\infty} \epsa(r) \gamma(r) \dr,
         \int_{-\infty}^{+\infty} \epsc(r) \gamma(r) \dr}\\
      & \le \delta.
   \end{align*}
   From~\eqref{ine:Lip(fr)<delta.eps.min(...)} and~\eqref{eq:lim_c(r)d(r)eps(r) =...=0} it follows that
   \begin{align*}
      \omg
      & = \sup_{s\in\R} \prtsr{
         \dint_{-\infty}^s \beta^+_{s,r} \Lipfr \alpha_{r,s} \dr
         + \dint_s^{+\infty}  \beta^-_{s,r} \Lipfr \alpha_{r,s} \dr}\\
      & = \sup_{s \in \R} \prtsr{\dint_{-\infty}^s
         \dfrac{\gd(r) \epsd(r) \gc(r) \epsc(s)}{\gd(s)\gc(s)} \Lip(f_r) \dr + \dint_s^{+\infty} \dfrac{\gb(s) \epsb(r) \ga(s) \epsa(s)}{\gb(r)\ga(r)} \Lip(f_r) \dr}\\
      & \le \delta\prtsr{
         \dfrac{\epsc(s)}{\gc(s)\gd(s)}
         \int_{-\infty}^s \prtsr{\dfrac{\gc(r)\gd(r)}{\epsc(r)}}' \dr
         - \ga(s)\gb(s)\epsa(s)
         \int_s^{+\infty} \prtsr{\dfrac{1}{\ga(r)\gb(r)\epsa(r)}}' \dr}\\
      & = 2 \delta.
   \end{align*}
   Hence, since $\delta < 1/6$ we have $2 \sigma + 2 \omega \le 6 \delta < 1$ and all hypothesis of Theorem~\ref{thm:global} are satisfied.
\end{proof}

\begin{remarks}\
   \begin{enumerate}[\lb=$\alph*)$,\lm=5mm]
      \item From~\eqref{ine:Lip(fr)<delta.eps.min(...)} it follows that for every $r \ne 0$ we must have
         \begin{equation*}
            \prtsr{\dfrac{\gc(r) \gd(r)}{\epsc(r)}}'
            = \dfrac{\gc(r)\gd(r)}{\epsc(r)}
               \prtsr{\dfrac{\gc'(r)}{\gc(r)} + \dfrac{\gd'(r)}{\gd(r)}
               - \dfrac{\epsc'(r)}{\epsc(r)}}
            > 0
         \end{equation*}
         and
         \begin{equation*}
            \prtsr{-\dfrac{1}{\ga(r)\gb(r)\epsa(r)}}'
            = \dfrac{1}{\ga(r)\gb(r)\epsa(r)}
               \prtsr{\dfrac{\ga'(r)}{\ga(r)} + \dfrac{\gb'(r)}{\gb(r)}
               + \dfrac{\epsa'(r)}{\epsa(r)}}
            > 0.
         \end{equation*}
      \item Note that the function $\gamma$ in last theorem can be chosen as
            $$ \gamma(t)
               = \dfrac{\mins{\dfrac{\epsc(s)}{2\gc(s)\gd(s)}
                     \prfrac{\gc(t)\gd(t)}{\epsc(t)}',
                     \dfrac{\ga(s)\gb(s)\epsa(s)}{2} \prtsr{-\dfrac{1}{\ga(t)\gb(t)\epsa(t)}}'}}
                  {\maxs{\epsa(t),\epsc(t)}}$$
         for $t \ne 0$ and where $s$ is a fixed real number. In fact, from~\eqref{eq:lim_c(r)d(r)eps(r) =...=0} we have
         \begin{align*}
            & \int_{-\infty}^{+\infty} \epsa(r) \gamma(r) \dr\\
            & = \int_{-\infty}^s \epsa(r) \gamma(r) \dr
                  +\int_s^{+\infty} \epsa(r) \gamma(r) \dr\\
            & \le \int_{-\infty}^{s} \dfrac{\epsc(s)}{2\gc(s)\gd(s)}
                     \prtsr{\dfrac{\gc(r)\gd(r)}{\epsc(r)}}' \dr
                  + \int_s^{+\infty} \dfrac{\ga(s)\gb(s)\epsa(s)}{2}
                     \prtsr{\dfrac{-1}{\ga(r)\gb(r)\epsa(r)}}'\dr\\
            & = 1
         \end{align*}
         and
         \begin{align*}
            & \int_{-\infty}^{+\infty} \epsc(r) \gamma(r) \dr\\
            & = \int_{-\infty}^s \epsc(r) \gamma(r) \dr
                  +\int_s^{+\infty} \epsc(r) \gamma(r) \dr\\
            & \le \dint_{-\infty}^s \dfrac{\epsc(s)}{2\gc(s)\gd(s)}
                     \prtsr{\dfrac{\gc(r)\gd(r)}{\epsc(r)}}' \dr
                  + \dint_s^{+\infty} \dfrac{\ga(s)\gb(s)\epsc(s)}{2}
                     \prtsr{\dfrac{-1}{\ga(r)\gb(r)\epsa(r)}}'\dr\\
            & = 1.
         \end{align*}
   \end{enumerate}
\end{remarks}

\begin{theorem}\label{thm:rho-ned}
   Let $X$ be a Banach space and suppose that equation~\eqref{eq:v'=A(t)v} admits a trichotomy with bounds of the form~\eqref{eq:alpha_t,s=De^[a(rho(t)-rho(s))+eps|rho(s)|]}. Suppose that $f \colon \R \times X \to X$ is a continuous function that satisfies~\eqref{eq:f(t,0)=0} and~\eqref{eq:f is Lip} and
    \begin{equation}\label{eq:Lip(f_r)<= for alpha_t,s with rho}
      \Lipfr
      \le \dfrac{\delta \rho'(r)}{D^2 \e^{2 \eps\abs{\rho(r)}}}
          \mins{-c-d-\eps \sgn(r), -a-b+\eps \sgn(r), \dfrac{D \gamma}{2}\e^{(\eps-\gamma)|\rho(r)|}},
   \end{equation}
   where $\delta < 1/6$ and
   \begin{equation}\label{eq:gamma=eps or gamma>0}
      \gamma = \eps \ \text{ if } \ \eps > 0
      \ \ \ \ \text{ and } \ \ \ \
      0 < \gamma < \dfrac{2}{D} \mins{-c-d,-a-b} \ \text{ if } \ \eps = 0.
   \end{equation}
   If
   \begin{equation}\label{ine:a+b+eps<0|c+d+eps<0}
      a + b +\eps < 0 \ \ \ \ \text{ and } \ \ \ \
      c+d+\eps < 0,
   \end{equation}
   then~\eqref{eq:v'=A(t)v+f(t,v)} admits a global Lipschtiz invariant center manifold, i.e., there is $N \in \ ]0,1[$ and a unique $\phi \in \cX_N$ such that
   \begin{equation*}
      \Psi_\tau(\cV_\phi)\subset \cV_\phi \ \ \text{ for every } \tau \in \R
   \end{equation*}
    and
   \begin{equation}\label{ine:||Psi.(.,xi)-Psi.(.,bxi)||<=...|rho-net}
      \norm{\Psi_{t-s}(p_{s,\xi}) - \Psi_{t-s}(p_{s,\bxi})}
         \le
         \begin{cases}
            \dfrac{DN}{\omg} \, \e^{a\prtsr{\rho(t)-\rho(s)}+\eps\abs{\rho(s)}} \, \norm{\xi-\bxi}
               \ \ \text{ if } t \ge s,\\[3mm]
            \dfrac{DN}{\omg} \, \e^{c\prtsr{\rho(s)-\rho(t)}+\eps\abs{\rho(s)}} \, \norm{\xi-\bxi}
               \ \ \text{ if } t \le s,
         \end{cases}
   \end{equation}
   for all $(s,\xi), (s,\bxi) \in G$ and where $p_{s,\xi} = \prts{s,\xi,\phi(s,\xi)}$ and $p_{s,\bxi} = \prts{s,\bxi,\phi(s,\bxi)}$.
\end{theorem}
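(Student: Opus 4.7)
The plan is to derive Theorem~\ref{thm:rho-ned} as a corollary of Theorem~\ref{thm:global:alpha_t,s=ga(s)/ga(t)eps_a(s)...} by specializing the nonuniform $\nabcd$-trichotomy data. Explicitly, I would set
   $$ \ga(t) = \e^{-a\rho(t)}, \ \ \gb(t) = \e^{-b\rho(t)}, \ \ \gc(t) = \e^{-c\rho(t)}, \ \ \gd(t) = \e^{-d\rho(t)}$$
and $\epsa(t)=\epsb(t)=\epsc(t)=\epsd(t)=D\e^{\eps\abs{\rho(t)}}$, so that the bounds~\eqref{eq:alpha_t,s=De^[a(rho(t)-rho(s))+eps|rho(s)|]} are recovered from~\eqref{eq:alpha_t,s=a(s)/a(t).epsa(s)...}. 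The required inequality~\eqref{t>=s=>(ga(s)gc(s))/(ga(t)gc(t)...>=1} follows automatically because the sum $-a-c$ combined with the symmetric $\eps$-terms behaves monotonically along $\rho$; a short check shows it reduces to $a+c\ge 0$, which is guaranteed by~\eqref{ine:a+b+eps<0|c+d+eps<0} (since $-c-d-\eps>0$ and $-a-b+\eps>0$ force $a+c<-(b+d)$ with both $-c-d,-a-b>0$).

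Next I would compute the three quantities that appear in the hypothesis~\eqref{ine:Lip(fr)<delta.eps.min(...)} of Theorem~\ref{thm:global:alpha_t,s=ga(s)/ga(t)eps_a(s)...}. A direct calculation using $\sgn(\rho(r))=\sgn(r)$ (because $\rho$ is odd and increasing) gives
   $$ \dfrac{1}{\gc(r)\gd(r)\epsd(r)}\prtsr{\dfrac{\gc(r)\gd(r)}{\epsc(r)}}'
      = \dfrac{\rho'(r)}{D^2\e^{2\eps\abs{\rho(r)}}}\,[-c-d-\eps\sgn(r)],$$
and analogously
   $$ \dfrac{\ga(r)\gb(r)}{\epsb(r)}\prtsr{-\dfrac{1}{\ga(r)\gb(r)\epsa(r)}}'
      = \dfrac{\rho'(r)}{D^2\e^{2\eps\abs{\rho(r)}}}\,[-a-b+\eps\sgn(r)].$$
Thus the first two entries of the min in~\eqref{eq:Lip(f_r)<= for alpha_t,s with rho} are exactly the first two entries of the min in~\eqref{ine:Lip(fr)<delta.eps.min(...)} after the common prefactor is pulled out.

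For the third entry I would choose
   $$ \gamma(r) = \dfrac{\gamma\rho'(r)}{2D}\,\e^{-(\eps+\gamma)\abs{\rho(r)}},$$
so that $\delta\gamma(r)$ matches $\delta\rho'(r)/(D^2\e^{2\eps\abs{\rho(r)}})\cdot (D\gamma/2)\e^{(\eps-\gamma)\abs{\rho(r)}}$. The verification of~\eqref{ine:int gamma dr < 1} is then the substitution $u=\rho(r)$:
\begin{equation*}
   \int_{-\infty}^{+\infty}\epsa(r)\gamma(r)\dr
      = \int_{-\infty}^{+\infty}\dfrac{\gamma\rho'(r)}{2}\,\e^{-\gamma\abs{\rho(r)}}\dr
      = \int_{-\infty}^{+\infty}\dfrac{\gamma}{2}\,\e^{-\gamma\abs{u}}\,du = 1,
\end{equation*}
and symmetrically for $\epsc$. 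The choice~\eqref{eq:gamma=eps or gamma>0} is exactly what makes this $\gamma$ well-defined and positive in both the $\eps>0$ and $\eps=0$ cases; in the latter case the denominator of the exponential collapses so one has to pick $\gamma>0$ independently and then check that the $\min$ in~\eqref{eq:Lip(f_r)<= for alpha_t,s with rho} is actually attained by one of the first two terms when $\gamma$ is small enough, which is precisely the bound $\gamma<(2/D)\min\{-c-d,-a-b\}$.

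The final step is to check the limit condition~\eqref{eq:lim_c(r)d(r)eps(r) =...=0}. Substituting the exponential choices, $\gc(r)\gd(r)\epsd(r)=D\e^{-(c+d)\rho(r)+\eps\abs{\rho(r)}}$ tends to $0$ as $r\to-\infty$ iff $c+d+\eps<0$, and $\epsb(r)/(\ga(r)\gb(r))=D\e^{(a+b)\rho(r)+\eps\abs{\rho(r)}}$ tends to $0$ as $r\to+\infty$ iff $a+b+\eps<0$; both are exactly the hypotheses~\eqref{ine:a+b+eps<0|c+d+eps<0}. Once all the hypotheses of Theorem~\ref{thm:global:alpha_t,s=ga(s)/ga(t)eps_a(s)...} are confirmed, the conclusion and the Lipschitz estimate in that theorem translate into the statement and the bound~\eqref{ine:||Psi.(.,xi)-Psi.(.,bxi)||<=...|rho-net}. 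The main subtlety to watch is the sign bookkeeping in handling $\sgn(r)$ on the two half-lines and making sure the chosen $\gamma(r)$ is genuinely the smallest term in the min on each half-line; everything else is routine substitution.
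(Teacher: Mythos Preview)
Your approach is essentially identical to the paper's: specialize Theorem~\ref{thm:global:alpha_t,s=ga(s)/ga(t)eps_a(s)...} with the exponential choices for $\ga,\gb,\gc,\gd$ and $\epsi$, compute the two derivative quantities to obtain the first two entries of the minimum, take $\gamma(r)=\dfrac{\gamma}{2D}\rho'(r)\e^{-(\eps+\gamma)|\rho(r)|}$, and verify~\eqref{ine:int gamma dr < 1} via the substitution $u=\rho(r)$; the limit condition~\eqref{eq:lim_c(r)d(r)eps(r) =...=0} reduces exactly to~\eqref{ine:a+b+eps<0|c+d+eps<0}.

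One correction: your paragraph about condition~\eqref{t>=s=>(ga(s)gc(s))/(ga(t)gc(t)...>=1} should be dropped. That inequality is \emph{not} a hypothesis of Theorem~\ref{thm:global:alpha_t,s=ga(s)/ga(t)eps_a(s)...}; it only appears in the paper's Example to verify that a specific ODE possesses the trichotomy, whereas here the trichotomy is assumed. More importantly, your claimed implication is false: from~\eqref{ine:a+b+eps<0|c+d+eps<0} one cannot deduce $a+c\ge 0$ (take $a=b=c=d=-1$, $\eps=0$). Since the condition is not needed, this does not damage your argument, but the reasoning as written is incorrect and should simply be removed.
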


\begin{proof}
   We are going to apply Theorem~\ref{thm:global:alpha_t,s=ga(s)/ga(t)eps_a(s)...}. For this bounds condition~\eqref{eq:lim_c(r)d(r)eps(r) =...=0} is equivalent to~\eqref{ine:a+b+eps<0|c+d+eps<0} because
   \begin{equation*}
      \gc(r) \gd(r) \epsd(r)
      = D\e^{(-c-d)\rho(r)+\eps |\rho(r)|}
      \ \ \ \ \text{ and } \ \ \ \
      \dfrac{\epsb(r)}{\ga(r)\gb(r)}
      = D \e^{(a+b)\rho(r)+\eps |\rho(r)|}.
   \end{equation*}
   Moreover, for $r \ne 0$ we have
   \begin{equation*}
      \begin{split}
         \dfrac{1}{\gc(r)\gd(r)\epsd(r)}\, \prfrac{\gc(r)\gd(r)}{\epsc(r)}'
         & = \dfrac{1}{\epsc(r) \epsd(r)}
            \prts{\dfrac{\gc'(r)}{\gc(r)}
            + \dfrac{\gd'(r)}{\gd(r)}
            - \dfrac{\epsc'(r)}{\epsc(r)}}\\[1mm]
         & = \dfrac{\rho'(r)}{D^2 \e^{2\eps \abs{\rho(r)}}} \,
            \prts{-c-d-\eps\sgn(r)}
      \end{split}
   \end{equation*}
   and
   \begin{equation*}
      \begin{split}
         \dfrac{\ga(r)\gb(r)}{\epsb(r)} \, \prtsr{-\dfrac{1}{\ga(r)\gb(r)\epsa(r)}}'
         & = \dfrac{1}{\epsa(r)\epsb(r)}
            \prts{\dfrac{\ga'(r)}{\ga(r)}
               + \dfrac{\gb'(r)}{\gb(r)}
               + \dfrac{\epsa'(r)}{\epsa(r)}}\\[1mm]
         & = \dfrac{\rho'(r)}{D^2 \e^{2\eps \abs{\rho(r)}}} \,
            \prts{-a-b+\eps\sgn(r)}.
      \end{split}
   \end{equation*}
   Making
   \begin{equation*}
      \gamma(r)
      = \dfrac{\gamma}{2D} \rho'(r) \e^{-(\eps+\gamma)|\rho(r)|},
   \end{equation*}
   with $\gamma$ given by~\eqref{eq:gamma=eps or gamma>0}, and observing that
      $$ \int_{-\infty}^{+\infty} D \e^{\eps\abs{\rho(r)}} \gamma(r) \dr
         = \int_{-\infty}^{+\infty}
            \dfrac{\gamma}{2} \rho'(r) \e^{-\gamma |\rho(r)|}\dr
         = 2 \int_0^{+\infty}
            \dfrac{\gamma}{2} \rho'(r) \e^{-\gamma \rho(r)}\dr
         = 1,$$
   condition~\eqref{ine:Lip(fr)<delta.eps.min(...)} becomes~\eqref{eq:Lip(f_r)<= for alpha_t,s with rho}.
\end{proof}

Note that Theorem~\ref{thm:rho-ned} improves the result of Barreira and Valls~\cite{Barreira_Valls-CPAA-2010} because our result has a better asymptotic behavior. In fact, with our notation, in~\eqref{ine:||Psi.(.,xi)-Psi.(.,bxi)||<=...|rho-net} where we have $a$ and $c$, Barreira and Valls~\cite{Barreira_Valls-CPAA-2010} have $a+2\delta D$ and $c+2\delta D$, respectively.

Making $\rho(t) = t$ in last theorem we have the following result.

\begin{corollary}
   Let $X$ be a Banach space and suppose that equation~\eqref{eq:v'=A(t)v} admits a trichotomy with bounds of the form~\eqref{eq:alpha_t,s = D e^[a(t-s)+eps|s|]}. Suppose that $f \colon \R \times X \to X$ is a continuous function such that~\eqref{eq:f(t,0)=0} and~\eqref{eq:f is Lip} are satisfied and
   \begin{equation*}
      \Lipfr
      \le \dfrac{\delta}{D^2 \e^{2 \eps\abs{r}}}
          \mins{-c-d-\eps \sgn(r), -a-b+\eps \sgn(r), \dfrac{D \gamma}{2}\e^{(\eps-\gamma)|r|}}.
   \end{equation*}
   where $\gamma$ is given by~\eqref{eq:gamma=eps or gamma>0}.
   If~\eqref{ine:a+b+eps<0|c+d+eps<0} is satisfied,
   then~\eqref{eq:v'=A(t)v+f(t,v)} admits a global Lipschtiz invariant center manifold, i.e., there is $N \in \ ]0,1[$ and a unique $\phi \in \cX_N$ such that
   \begin{equation*}
      \Psi_\tau(\cV_\phi)\subset \cV_\phi \ \ \text{ for every } \tau \in \R
   \end{equation*}
    and
   \begin{equation*}
      \norm{\Psi_{t-s}(p_{s,\xi}) - \Psi_{t-s}(p_{s,\bxi})}
         \le
         \begin{cases}
            \dfrac{DN}{\omg} \, \e^{a\prts{t-s}+\eps\abs{s}} \, \norm{\xi-\bxi}
               \ \ \text{ if } t \ge s,\\[3mm]
            \dfrac{DN}{\omg} \, \e^{c\prts{s-t}+\eps\abs{s}} \, \norm{\xi-\bxi}
               \ \ \text{ if } t \le s,
         \end{cases}
   \end{equation*}
   for all $(s,\xi), (s,\bxi) \in G$ and where $p_{s,\xi} = \prts{s,\xi,\phi(s,\xi)}$ and $p_{s,\bxi} = \prts{s,\bxi,\phi(s,\bxi)}$.
\end{corollary}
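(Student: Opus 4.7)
The plan is to derive this corollary as an immediate specialization of Theorem~\ref{thm:rho-ned} to the choice $\rho(t) = t$, so the work reduces to verifying that the hypotheses of that theorem translate correctly. First I would observe that $\rho(t) = t$ is an odd, increasing, differentiable function with $\lim_{t \to +\infty}\rho(t) = +\infty$, so it qualifies as an admissible weight. With this choice, $\rho'(r) = 1$ and $\abs{\rho(r)} = \abs{r}$ everywhere, and $\rho(t) - \rho(s) = t-s$, which means that the trichotomy bounds in~\eqref{eq:alpha_t,s=De^[a(rho(t)-rho(s))+eps|rho(s)|]} reduce exactly to the bounds in~\eqref{eq:alpha_t,s = D e^[a(t-s)+eps|s|]} appearing in the hypothesis of the corollary.

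Next I would check the Lipschitz condition. Plugging $\rho'(r) = 1$ and $\abs{\rho(r)} = \abs{r}$ into~\eqref{eq:Lip(f_r)<= for alpha_t,s with rho} yields
\begin{equation*}
   \Lipfr
   \le \dfrac{\delta}{D^2 \e^{2 \eps\abs{r}}}
      \mins{-c-d-\eps \sgn(r), -a-b+\eps \sgn(r), \dfrac{D \gamma}{2}\e^{(\eps-\gamma)|r|}},
\end{equation*}
which is exactly the bound assumed in the corollary, with $\gamma$ chosen as in~\eqref{eq:gamma=eps or gamma>0}. Condition~\eqref{ine:a+b+eps<0|c+d+eps<0} in Theorem~\ref{thm:rho-ned} is literally the hypothesis of the corollary, so it transfers with no change.

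Finally, the conclusion of Theorem~\ref{thm:rho-ned} provides the existence of $N \in \ ]0,1[$ and a unique $\phi \in \cX_N$ with $\Psi_\tau(\cV_\phi) \subset \cV_\phi$ for all $\tau \in \R$, together with the bound~\eqref{ine:||Psi.(.,xi)-Psi.(.,bxi)||<=...|rho-net}. Specializing that bound to $\rho(t) = t$ gives
\begin{equation*}
   \norm{\Psi_{t-s}(p_{s,\xi}) - \Psi_{t-s}(p_{s,\bxi})}
      \le
      \begin{cases}
         \dfrac{DN}{\omg} \, \e^{a(t-s)+\eps\abs{s}} \, \norm{\xi-\bxi}
            & \text{if } t \ge s,\\[3mm]
         \dfrac{DN}{\omg} \, \e^{c(s-t)+\eps\abs{s}} \, \norm{\xi-\bxi}
            & \text{if } t \le s,
      \end{cases}
\end{equation*}
which is precisely the estimate stated in the corollary. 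Since the argument is entirely a matter of substitution, there is no genuine obstacle — the only thing worth double-checking is that the expression $\sgn(r)$, which is discontinuous at $r=0$, does not cause trouble in the application, but this is harmless because the bound~\eqref{eq:Lip(f_r)<= for alpha_t,s with rho} is only required for $r \in \R \setm{0}$ (as in~\eqref{ine:Lip(fr)<delta.eps.min(...)}), and the integrals defining $\sgm$ and $\omg$ are insensitive to a single point.
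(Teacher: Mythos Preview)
Your proposal is correct and matches the paper's own approach exactly: the paper introduces this corollary with the single sentence ``Making $\rho(t) = t$ in last theorem we have the following result,'' and you have simply spelled out that substitution in detail. There is nothing to add.
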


Again, as in last theorem, we improve the asymptotic behavior of the result obtained by Barreira and Valls in~\cite{Barreira_Valls-ETDS-2006}.

Now we are going to assume that equation~\eqref{eq:v'=A(t)v} admits a generalized trichotomy with bounds of the form
\begin{equation*}
   \begin{split}
      \alpha_{t,s}
      & =
         \begin{cases}
            D (\mu(t)-\mu(s)+1)^a (|\mu(s)|+1)^\eps\\
            D (\mu(s)-\mu(t)+1)^c (|\mu(s)|+1)^\eps\\
         \end{cases}\\
      \beta^+_{t,s}
         & = D (\mu(t)-\mu(s)+1)^d (|\mu(s)|+1)^\eps\\
      \beta^-_{t,s}
         & = D (\mu(s)-\mu(t)+1)^b (|\mu(s)|+1)^\eps\\
   \end{split}
   \begin{split}
      \text{ for all } (t,s) \in \R^2_\ge,\\
      \text{ for all } (t,s) \in \R^2_\le,\\
      \text{ for all } (t,s) \in \R^2_\ge,\\
      \text{ for all } (t,s) \in \R^2_\le,
   \end{split}
\end{equation*}
where $\mu \colon \R \to \R$ is a differentiable function with positive derivative and
   $$ \dlim_{t \to + \infty} \mu(t)
      = \lim_{t \to -\infty} -\mu(t)
      = + \infty,$$
$a,b,c,d, D, \eps$ are real constants with $D \ge 1$ and $\eps \ge 0$. We call this type of trichotomy a \textit{$\mu-$nonuniform polynomial trichotomy}.

\begin{theorem}
   \label{thm:global:alpha_t,s = (mu(t)-mu(s)+1)^a(|mu(s)|+a)^eps...}
   Let $X$ be a Banach space. Suppose that equation~\eqref{eq:v'=A(t)v} admits a $\mu-$nonuniform polynomial trichotomy and let $f \colon \R \times X \to X$ be a continuous function such that~\eqref{eq:f(t,0)=0} and~\eqref{eq:f is Lip} are satisfied and
      $$ \Lip (f_r)\le \delta \mu'(r)(|\mu(r)|+1)^{-\gamma}$$
   with $\gamma > 0$. If~\eqref{ine:a+b+eps<0|c+d+eps<0} is satisfied, $\delta$ is sufficiently small and
      \begin{align*}
         & a, c \le 0, \ \ \
         2\eps \le \gamma \ \ \ \text{ and } \ \ \
         \eps - \gamma + 1 <0,
      \end{align*}
   then~\eqref{eq:v'=A(t)v+f(t,v)} admits a global Lipschitz invariant center manifold, i.e., there is $N \in \ ]0,1[$ and a unique $\phi \in \cX_N$ such that
   \begin{equation*}
      \Psi_\tau(\cV_\phi)\subset \cV_\phi \ \ \text{ for every } \tau \in \R
   \end{equation*}
    and
   \begin{align*}
      & \norm{\Psi_{t-s}(p_{s,\xi}) - \Psi_{t-s}(p_{s,\bxi})}\\
      & \le
         \begin{cases}
            \dfrac{DN}{\omg} \, \prts{\mu(t)-\mu(s)+1}^a \prts{\abs{\mu(s)}+1}^\eps \, \norm{\xi-\bxi}
            & \text{ if } t \ge s,\\[3mm]
            \dfrac{DN}{\omg} \, \prts{\mu(s)-\mu(t)+1}^c \prts{\abs{\mu(s)}+1}^\eps \, \norm{\xi-\bxi}
             & \text{ if } t \le s,
         \end{cases}
   \end{align*}
   for all $(s,\xi), (s,\bxi) \in G$ and where $p_{s,\xi} = \prts{s,\xi,\phi(s,\xi)}$ and $p_{s,\bxi} = \prts{s,\bxi,\phi(s,\bxi)}$.
\end{theorem}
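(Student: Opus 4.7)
The plan is to follow the same template as Theorem~\ref{thm:rho-ned}: verify the hypotheses of Theorem~\ref{thm:global} directly. Unlike the $\rho$-nonuniform exponential case, however, the polynomial bounds $(\mu(t)-\mu(s)+1)^a(|\mu(s)|+1)^\eps$ do not factor as a ratio $\ga(s)/\ga(t)$ times an $s$-dependent term, so Theorem~\ref{thm:global:alpha_t,s=ga(s)/ga(t)eps_a(s)...} is not directly applicable and we must return to the general framework.

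First I would check the limit condition~\eqref{eq:lim+oo b^- a = lim-oo b^+ a =0}. For $r \ge s$,
\[
\beta^-_{s,r}\,\alpha_{r,s} = D^2(\mu(r)-\mu(s)+1)^{a+b}(|\mu(r)|+1)^\eps(|\mu(s)|+1)^\eps,
\]
which behaves like $\mu(r)^{a+b+\eps}$ as $r\to+\infty$ and therefore tends to $0$ thanks to $a+b+\eps<0$. The analogous computation for $r\to-\infty$ relies on $c+d+\eps<0$, so both parts of~\eqref{ine:a+b+eps<0|c+d+eps<0} are used.

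Next I would estimate $\sgm$. The key technical ingredient is the submultiplicative inequality
\[
\mu(t)-\mu(s)+1 \le (\mu(t)-\mu(r)+1)(\mu(r)-\mu(s)+1) \qquad (s\le r\le t),
\]
which is just $(x+1)(y+1) = xy+x+y+1 \ge x+y+1$ applied to $x=\mu(t)-\mu(r)$, $y=\mu(r)-\mu(s)$. Since $a \le 0$ this yields $(\mu(t)-\mu(r)+1)^a(\mu(r)-\mu(s)+1)^a \le (\mu(t)-\mu(s)+1)^a$, hence
\[
\frac{\alpha_{t,r}\,\alpha_{r,s}}{\alpha_{t,s}} \le D(|\mu(r)|+1)^\eps,
\]
and the analogous estimate with $c \le 0$ will handle $t \le s$. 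Combining with $\Lip(f_r) \le \delta\mu'(r)(|\mu(r)|+1)^{-\gamma}$ and the substitution $u=\mu(r)$ leads to
\[
\sgm \le D\delta\int_{-\infty}^{+\infty}(|u|+1)^{\eps-\gamma}\,du,
\]
which is finite precisely because $\eps-\gamma+1<0$, and arbitrarily small for small $\delta$.

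The estimate for $\omg$ will be the main obstacle and is where the remaining hypotheses enter. For $r \le s$, the integrand is bounded by
\[
\beta^+_{s,r}\Lip(f_r)\alpha_{r,s} \le D^2\delta(\mu(s)-\mu(r)+1)^{c+d}(|\mu(r)|+1)^{2\eps-\gamma}\mu'(r),
\]
where $2\eps\le\gamma$ makes the second factor nonincreasing in $|\mu(r)|$ and $c+d<-\eps\le 0$ forces decay in the first. Using the pointwise inequality $|\mu(r)|+1 \le \maxs{\mu(s)-\mu(r)+1,|\mu(s)|+1}$ valid for $r\le s$ and splitting the integral at $\mu(r)=0$, I plan to bound it by a constant multiple of $\delta$ that is uniform in $s$; a symmetric computation will handle the integral over $r\ge s$ using $a+b+\eps<0$. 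Thus $\omg \le C\delta$ for some constant $C$ depending only on the trichotomy parameters, and choosing $\delta$ so small that $2\sgm+2\omg<1$ places us inside the scope of Theorem~\ref{thm:global}. Finally, substituting the polynomial formulas for $\alpha_{t,s}$ into the concluding inequality of that theorem gives the asserted asymptotic estimate for $\Psi_{t-s}$.
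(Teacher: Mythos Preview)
Your treatment of the limit condition~\eqref{eq:lim+oo b^- a = lim-oo b^+ a =0} and of $\sgm$ is exactly what the paper does: the same submultiplicative inequality, the same use of $a,c\le 0$, the same substitution $u=\mu(r)$ yielding $\sgm\le 2D\delta/\abs{\eps-\gamma+1}$.

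The gap is in your estimate for $\omg$. First, the integrand bound you wrote is not correct. For $r\le s$ one has
\[
   \beta^+_{s,r}\Lipfr\,\alpha_{r,s}
   \le D^2\delta\,\mu'(r)\,(\mu(s)-\mu(r)+1)^{c+d}\,(|\mu(r)|+1)^{\eps-\gamma}\,(|\mu(s)|+1)^\eps,
\]
with exponent $\eps-\gamma$ on the $|\mu(r)|$ factor and an \emph{additional} factor $(|\mu(s)|+1)^\eps$ that depends on $s$ and must be absorbed to get a uniform bound. You wrote exponent $2\eps-\gamma$ and dropped the $s$-dependent factor; this amounts to replacing $(|\mu(s)|+1)^\eps$ by $(|\mu(r)|+1)^\eps$, which is false already when $0\le\mu(r)<\mu(s)$. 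Second, your proposed pointwise inequality $|\mu(r)|+1\le\maxs{\mu(s)-\mu(r)+1,\,|\mu(s)|+1}$ is also false: take $\mu(r)=-10$, $\mu(s)=-1$, so the left side is $11$ while the maximum on the right is $10$. Finally, even if you repair these by the valid product bound $(|\mu(s)|+1)\le(\mu(s)-\mu(r)+1)(|\mu(r)|+1)$, the resulting integrand $(\mu(s)-\mu(r)+1)^{c+d+\eps}(|\mu(r)|+1)^{2\eps-\gamma}$ requires $c+d+3\eps-\gamma+1<0$ for a uniform bound, and this does \emph{not} follow from the stated hypotheses.

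The paper handles this via a dedicated technical lemma (Lemma~\ref{lemma:int_0^+inf (1+t)^lbd (|t+p|+1)^nu...<=...}). After the substitutions $\tau=\mu(s)-\mu(r)$ and $\tau=\mu(r)-\mu(s)$, both halves of $\omg$ become integrals of the form
\[
   \int_0^{+\infty}(\tau+1)^{\lbd}(|\tau+p|+1)^{\nu}(|p|+1)^\eps\,d\tau,
\]
with $\lbd\in\{a+b,c+d\}$, $\nu=\eps-\gamma$, and $p=\pm\mu(s)$. The lemma bounds this uniformly in $p$ under the conditions $\lbd+\eps\le 0$, $\nu+\eps\le 0$, and $\lbd+\nu+\eps+1<0$, which are precisely $a+b+\eps<0$, $c+d+\eps<0$, $2\eps\le\gamma$, and $(a+b)+2\eps-\gamma+1<0$, $(c+d)+2\eps-\gamma+1<0$ (the last two following from $(c+d+\eps)+(\eps-\gamma+1)<0$, etc.). The key device is a three-region split $[0,|p|/2]\cup[|p|/2,|p|]\cup[|p|,\infty)$ when $p<0$; on each piece one of the two base quantities dominates the factor $(|p|+1)^\eps$, and this is what your two-region split at $\mu(r)=0$ misses.
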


To prove this theorem we need the following lemma.

\begin{lemma}\label{lemma:int_0^+inf (1+t)^lbd (|t+p|+1)^nu...<=...}
   Let $\lbd$, $\nu <0$, $\eps \ge 0$ and $p \in \R$. If
      $$ \lbd+\eps+\nu+1 < 0, \ \ \
         \lbd + \eps \le 0
         \ \ \ \text{ and } \ \ \
         \nu+\eps \le 0,$$
   then
      $$ \dint_0^{+\infty}
            \prts{1+\tau}^\lbd
            \prts{\abs{\tau+p}+1}^\nu
            \prts{\abs{p}+1}^\eps \dtau
         \le
         \begin{cases}
            \dfrac{1}{\abs{\lbd+\eps+\nu+1}}
               & \text{ if \ \ } p \ge 0,\\[4mm]
            \dfrac{2^{\eps+1}+1}{\abs{\lbd+\eps+\nu+1}}
               & \text{ if \ \ } p < 0.
         \end{cases}$$
\end{lemma}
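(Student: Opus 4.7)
The plan is to prove the lemma by splitting into the two cases $p\ge 0$ and $p<0$, reducing both to the integral $\int_0^{+\infty}(1+\tau)^{\lambda+\nu+\eps}\,d\tau$, which converges to $1/\abs{\lambda+\nu+\eps+1}$ by the hypothesis $\lambda+\nu+\eps+1<0$.

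The case $p\ge 0$ is straightforward. First I would use $\abs{\tau+p}+1=\tau+p+1\ge 1+\tau$, so since $\nu<0$ we get $(\abs{\tau+p}+1)^\nu\le(1+\tau)^\nu$; and since $\eps\ge 0$ and $\abs{p}+1=p+1\le\tau+p+1=\abs{\tau+p}+1$, we get $(\abs{p}+1)^\eps\le(\abs{\tau+p}+1)^\eps$. Multiplying,
\[
   (\abs{\tau+p}+1)^\nu(\abs{p}+1)^\eps\le(\abs{\tau+p}+1)^{\nu+\eps}\le(1+\tau)^{\nu+\eps}
\]
where the last inequality uses $\nu+\eps\le 0$. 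The integrand is then bounded by $(1+\tau)^{\lambda+\nu+\eps}$, giving the claimed bound $1/\abs{\lambda+\nu+\eps+1}$.

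The case $p<0$ is the main obstacle: writing $q=-p>0$, the factor $(\abs{p}+1)^\eps=(q+1)^\eps$ can be arbitrarily large and cannot be absorbed by one of the negative-exponent factors on all of $[0,+\infty)$. My plan is to split the integral at $\tau=q$ (where $\abs{\tau-q}+1$ attains its minimum) and then further split $[0,q]$ at the midpoint $\tau=q/2$.
\begin{itemize}
\item On $[q,+\infty)$: since $q\le\tau$ and $\eps\ge 0$, $(q+1)^\eps\le(1+\tau)^\eps$; then $(1+\tau)^{\lambda+\eps}\le(1+\tau-q)^{\lambda+\eps}$ because $\lambda+\eps\le 0$ and $1+\tau\ge 1+\tau-q$. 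Combining with $(\abs{\tau-q}+1)^\nu=(1+\tau-q)^\nu$ and substituting $u=\tau-q$ yields exactly $1/\abs{\lambda+\nu+\eps+1}$.
\item On $[0,q/2]$: note $q-\tau+1\ge 1+\tau$, so $q+1\le 2(q-\tau+1)$. Then $(q+1)^\eps\le 2^\eps(q-\tau+1)^\eps$, and since $\nu+\eps\le 0$ with $q-\tau+1\ge 1+\tau$, the integrand is at most $2^\eps(1+\tau)^{\lambda+\nu+\eps}$.
\item On $[q/2,q]$: symmetrically, $1+\tau\ge q-\tau+1$, so $q+1\le 2(1+\tau)$, giving $(q+1)^\eps\le 2^\eps(1+\tau)^\eps$; then $(1+\tau)^{\lambda+\eps}\le(q-\tau+1)^{\lambda+\eps}$ because $\lambda+\eps\le 0$. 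The integrand is bounded by $2^\eps(q-\tau+1)^{\lambda+\nu+\eps}$.
\end{itemize}
Summing the last two pieces (the second via $u=q-\tau$) gives $\le 2^{\eps+1}\int_0^{+\infty}(1+\tau)^{\lambda+\nu+\eps}\,d\tau=2^{\eps+1}/\abs{\lambda+\nu+\eps+1}$, and adding the $[q,+\infty)$ contribution produces $(2^{\eps+1}+1)/\abs{\lambda+\nu+\eps+1}$ as required.

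The heart of the argument is the midpoint split on $[0,q]$ together with the two pointwise inequalities $q+1\le 2(q-\tau+1)$ on $[0,q/2]$ and $q+1\le 2(1+\tau)$ on $[q/2,q]$, which is exactly what lets one trade the large factor $(q+1)^\eps$ for a bounded constant $2^\eps$ times a factor that can be absorbed into whichever of $(1+\tau)^\lambda$ or $(q-\tau+1)^\nu$ dominates on the respective subinterval. All three hypotheses $\lambda+\eps\le 0$, $\nu+\eps\le 0$ and $\lambda+\nu+\eps+1<0$ are used precisely, the first two in the absorption step and the last for integrability at infinity.
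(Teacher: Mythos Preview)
Your proof is correct and follows essentially the same approach as the paper: the same splitting at $\tau=q/2$ and $\tau=q$ (with $q=\abs{p}$), the same key pointwise inequalities $q+1\le 2(q-\tau+1)$ on $[0,q/2]$ and $q+1\le 2(1+\tau)$ on $[q/2,q]$, and the same absorption of the resulting factors using $\nu+\eps\le 0$ and $\lambda+\eps\le 0$ respectively. The paper's argument is identical up to the notation $\abs{p}$ in place of your $q$.
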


\begin{proof}
   If $p \ge 0$, since $\eps \ge 0$, $\nu+\eps \le 0$ and $\lbd+\eps+\nu+1 < 0$, we have
   \begin{align*}
      \dint_0^{+\infty}
         \prts{\tau+1}^\lbd
         \prts{\abs{\tau+p}+1}^\nu
         \prts{\absp+1}^\eps \dtau
      & = \dint_0^{+\infty}
         \prts{\tau+1}^\lbd
         \prts{\tau+p+1}^\nu
         \prts{p+1}^\eps \dtau\\
      & \le \dint_0^{+\infty}
         \prts{\tau+1}^\lbd
         \prts{\tau+p+1}^{\nu+\eps} \dtau\\
      & \le \dint_0^{+\infty}
         \prts{\tau+1}^{\lbd+\nu+\eps} \dtau\\
      & = \dfrac{1}{\abs{\lbd+\nu+\eps+1}}.
   \end{align*}

   If $p < 0$, then
      $$ \abs{\tau+p}+1
         =
         \begin{cases}
            \abs{p} - \tau + 1 & \text{ if \ \ } 0 \le \tau \le \abs{p},\\
            \tau - \abs{p} + 1 & \text{ if \ \ } \tau \ge \abs{p}.
         \end{cases}$$
   and this implies
   \begin{align*}
      & \dint_0^{\abs{p}/2}
         \prts{\tau+1}^\lbd
         \prts{\abs{p}-\tau+1}^\nu
         \prts{\abs{p}+1}^\eps \dtau\\
      & = \dint_0^{\abs{p}/2}
            \prts{\tau+1}^\lbd
            \prts{\abs{p} - \tau+1}^{\nu+\eps}
            \pfrac{\abs{p}+1}{\abs{p}-\tau+1}^\eps \dtau\\
       & \le 2^\eps \dint_0^{\abs{p}/2}
            \prts{\tau+1}^{\lbd+\nu+\eps} \dtau\\
       & \le \dfrac{2^\eps}{\abs{\lbd+\nu+\eps+1}},
   \end{align*}
   \begin{align*}
      & \dint_{\abs{p}/2}^{\abs{p}}
         \prts{\tau+1}^\lbd
         \prts{\abs{p}-\tau+1}^\nu
         \prts{\abs{p}+1}^\eps \dtau\\
      & = \dint_{\abs{p}/2}^{\abs{p}}
            \prts{\tau+1}^{\lbd+\eps}
            \prts{\abs{p} - \tau+1}^\nu
            \pfrac{\abs{p}+1}{\tau+1}^\eps \dtau\\
       & \le 2^\eps \dint_{\abs{p}/2}^{\abs{p}}
            \prts{\abs{p}-\tau+1}^{\lbd+\nu+\eps} \dtau\\
       & \le \dfrac{2^\eps}{\abs{\lbd+\nu+\eps+1}}
   \end{align*}
   and
   \begin{align*}
      \dint_{\absp}^{+\infty}
         \prts{\tau+1}^\lbd
         \prts{\tau-\absp+1}^\nu
         \prts{\absp+1}^\eps \dtau
      & \le \dint_{\absp}^{+\infty}
         \prts{\tau+1}^{\lbd+\eps}
         \prts{\tau-\absp+1}^\nu\dtau\\
      & \le \dint_{\absp}^{+\infty}
         \prts{\tau-\absp+1}^{\lbd+\nu+\eps}\dtau\\
       & \le \dfrac{1}{\abs{\lbd+\nu+\eps+1}}.
   \end{align*}
   Hence, if $p < 0$ we have
      $$ \dint_0^{+\infty}
            \prts{\tau+1}^\lbd
            \prts{\abs{\tau+p}+1}^\nu
            \prts{\abs{p}+1}^\eps \dtau
         \le \dfrac{2^{\eps+1}+1}{\abs{\lbd+\eps+\nu+1}}.$$
\end{proof}

\begin{proof}[Proof of Theorem~$\ref{thm:global:alpha_t,s = (mu(t)-mu(s)+1)^a(|mu(s)|+a)^eps...}$]
   Since for $r \le s$ we have
      $$ \beta^+_{s,r} \alpha_{r,s}
         =  D^2 \prts{\mu(s)-\mu(r)+1}^{c+d+\eps}
            \pfrac{\abs{\mu(r)}+1}{\mu(s)-\mu(r) +1}^\eps \prts{\abs{\mu(s)}+1}^\eps$$
   and for $r \ge s$ we have
      $$ \beta^-_{s,r} \alpha_{r,s}
         = D^2 \prts{\mu(r)-\mu(s)+1}^{a+b+\eps}
            \pfrac{\abs{\mu(r)}+1}{\mu(r)-\mu(s)+1}^\eps
            \prts{\abs{\mu(s)}+1}^\eps,$$
   it follows that $c+d+\eps<0$ and $a+b+\eps <0$ are equivalent to
      $$ \lim_{r \to -\infty} \beta^+_{s,r} \alpha_{r,s}
         = \lim_{r \to +\infty} \beta^-_{s,r} \alpha_{r,s} = 0.$$

   Since for every $t \ge r \ge s$ and every $t \le r \le s$ we have
   \begin{align*}
      \prtsr{\mu(t)-\mu(r)+1} \prtsr{\mu(r)-\mu(s)+1}
       & = \prtsr{\mu(t)-\mu(r)} \prtsr{\mu(r)-\mu(s)} + \mu(t)- \mu(s) + 1\\
       & \ge \mu(t)- \mu(s) + 1,
   \end{align*}
   from $a \le 0$ it follows that
   \begin{equation*}
       \dfrac{\alpha_{t,r} \alpha_{r,s}}{\alpha_{t,s}}
         = D \dfrac{\prts{\mu(t)-\mu(r)+1}^a\prts{\mu(r)-\mu(s)+1}^a}
            {\prts{\mu(t)-\mu(s)+1}^a}\prts{\abs{\mu(r)}+1}^\eps
         \le D \prts{\abs{\mu(r)}+1}^\eps
   \end{equation*}
   for $t\ge r\ge s$ and from $c \le 0$ we also have
   \begin{equation*}
       \dfrac{\alpha_{t,r} \alpha_{r,s}}{\alpha_{t,s}}
         = \dfrac{\prts{\mu(r)-\mu(t)+1}^c\prts{\mu(s)-\mu(r)+1}^c}
            {\prts{\mu(s)-\mu(t)+1}^c}\prts{\abs{\mu(r)}+1}^\eps
         \le D \prts{\abs{\mu(r)}+1}^\eps
   \end{equation*}
   for $t\le r\le s$. Then, since $\gamma > \eps + 1$, it follows that
   \begin{align*}
      \sgm
      & = \sup_{(t,s) \in \R^2} \abs{\dint_s^t
         \dfrac{\alpha_{t,r} \Lip(f_r) \alpha_{r,s}}{\alpha_{t,s}} \dr}\\
      & \le D \delta \sup_{(t,s) \in \R^2}
         \abs{\dint_s^t \mu'(r) \prts{\abs{\mu(r)}+1}^{\eps - \gamma} \dr}\\
      & = D \delta
         \int_{-\infty}^{+\infty} \mu'(r) \prts{\abs{\mu(r)}+1}^{\eps - \gamma} \dr\\
      & = D \delta
         \int_{-\infty}^{+\infty} \prts{\abs{\tau}+1}^{\eps - \gamma} \dtau\\
      & = \dfrac{2 D \delta}{\abs{\eps-\gamma+1}}.
   \end{align*}
   Here we made the substitution $\tau=\mu(r)$.

   Making the substitution $\tau=\mu(s) - \mu(r)$ we have
   \begin{align*}
      & \dint_{-\infty}^s \beta^+_{s,r} \Lipfr \alpha_{r,s} \dr\\
      & \le D^2 \delta  \dint_{-\infty}^s \mu'(r)
         \prts{\mu(s)-\mu(r)+1}^{c+d}
         \prts{\abs{\mu(r)}+1}^{\eps-\gamma}
         \prts{\abs{\mu(s)}+1}^\eps \dr\\
      & = D^2 \delta \dint_0^{+\infty}
         \prts{\tau+1}^{c+d}
         \prts{\abs{\tau-\mu(s)}+1}^{\eps-\gamma}
         \prts{\abs{\mu(s)}+1}^\eps \dtau
   \end{align*}
   and with the substitution $\tau = \mu(r)-\mu(s)$ we obtain
   \begin{align*}
      & \dint_s^{+\infty} \beta^-_{s,r} \Lipfr \alpha_{r,s} \dr\\
      & \le D^2 \delta  \dint_s^{+\infty}
         \mu'(r) \prts{\mu(r)-\mu(s)+1}^{a+b}
         \prts{\abs{\mu(r)}+1}^{\eps-\gamma}
         \prts{\abs{\mu(s)}+1}^\eps \dr\\
      & = D^2 \delta \dint_0^{+\infty}
         \prts{\tau+1}^{a+b}
         \prts{\abs{\tau+\mu(s)}+1}^{\eps-\gamma}
         \prts{\abs{\mu(s)}+1}^\eps \dtau.
   \end{align*}
   Using Lemma~\ref{lemma:int_0^+inf (1+t)^lbd (|t+p|+1)^nu...<=...} it follows that
   \begin{align*}
      \omg
      & = \sup_{s \in R} \prtsr{
         \dint_{-\infty}^s \beta^+_{s,r} \Lipfr \alpha_{r,s} \dr
         + \dint_s^{+\infty} \beta^-_{s,r} \Lipfr \alpha_{r,s} \dr}\\
      & \le D^2 \delta \prts{\dfrac{2^{\eps+1}+1}{\abs{\maxs{a+b,c+d}+2\eps-\gamma+1}}
         + \dfrac{1}{\abs{\mins{a+b,c+d}+2\eps-\gamma+1}}}
   \end{align*}
   Hence, if $\delta$ is sufficiently small we have $2\sgm+2\omg<1$ and the theorem is proved.
\end{proof}

In the next corollary we will consider $\mu(t)= t$, i.e.,
\begin{equation*}
   \begin{split}
      \alpha_{t,s}
      & =
         \begin{cases}
            D (t-s+1)^a (|s|+1)^\eps\\
            D (s-t+1)^c (|s|+1)^\eps\\
         \end{cases}\\
      \beta^+_{t,s}
         & = D (t-s+1)^d (|s|+1)^\eps\\
      \beta^-_{t,s}
         & = D (s-t+1)^b (|s|+1)^\eps\\
   \end{split}
   \begin{split}
      \text{ for all } (t,s) \in \R^2_\ge,\\
      \text{ for all } (t,s) \in \R^2_\le,\\
      \text{ for all } (t,s) \in \R^2_\ge,\\
      \text{ for all } (t,s) \in \R^2_\le,
   \end{split}
\end{equation*}
and this type of trichotomies are called \textit{nonuniform polynomial trichotomies}.

\begin{corollary}
   Let $X$ be a Banach space. Suppose that equation~\eqref{eq:v'=A(t)v} admits a nonuniform polynomial trichotomy and let $f \colon \R \times X \to X$ be a continuous function such that~\eqref{eq:f(t,0)=0} and~\eqref{eq:f is Lip} are satisfied and
      $$ \Lip (f_r)\le \delta (|r|+1)^{-\gamma}$$
   with $\gamma > 0$. If~\eqref{ine:a+b+eps<0|c+d+eps<0} is satisfied, $\delta$ is sufficiently small and
      $$ a, c \le 0, \ \ \
         2\eps \le \gamma \ \ \ \text{ and } \ \ \
         \eps-\gamma + 1 < 0,$$
   then~\eqref{eq:v'=A(t)v+f(t,v)} admits a global Lipschtiz invariant center manifold, i.e., there is $N \in \ ]0,1[$ and a unique $\phi \in \cX_N$ such that
   \begin{equation*}
      \Psi_\tau(\cV_\phi)\subset \cV_\phi \ \ \text{ for every } \tau \in \R
   \end{equation*}
    and
   \begin{equation*}
      \norm{\Psi_{t-s}(p_{s,\xi}) - \Psi_{t-s}(p_{s,\bxi})}
         \le
         \begin{cases}
            \dfrac{DN}{\omg} \, \prts{t-s+1}^a \prts{\abs{s}+1}^\eps \, \norm{\xi-\bxi}
               \ \ \text{ if } t \ge s,\\[3mm]
            \dfrac{DN}{\omg} \, \prts{s-t+1}^c \prts{\abs{s}+1}^\eps \, \norm{\xi-\bxi}
               \ \ \text{ if } t \le s,
         \end{cases}
   \end{equation*}
   for all $(s,\xi), (s,\bxi) \in G$ and where $p_{s,\xi} = \prts{s,\xi,\phi(s,\xi)}$ and $p_{s,\bxi} = \prts{s,\bxi,\phi(s,\bxi)}$.
\end{corollary}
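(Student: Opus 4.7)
The plan is to obtain this corollary as a direct specialization of Theorem~\ref{thm:global:alpha_t,s = (mu(t)-mu(s)+1)^a(|mu(s)|+a)^eps...} by taking $\mu(t)=t$. Since $\mu(t)=t$ is differentiable with positive derivative $\mu'(t)=1$ and satisfies $\lim_{t\to+\infty}\mu(t)=\lim_{t\to-\infty}-\mu(t)=+\infty$, it is an admissible choice for the function $\mu$ appearing in the definition of a $\mu$-nonuniform polynomial trichotomy. With this choice, the bounds $\alpha_{t,s}$, $\beta^+_{t,s}$, $\beta^-_{t,s}$ in the definition of a $\mu$-nonuniform polynomial trichotomy reduce exactly to the bounds displayed before the corollary, so the assumption that~\eqref{eq:v'=A(t)v} admits a nonuniform polynomial trichotomy is precisely the hypothesis of Theorem~\ref{thm:global:alpha_t,s = (mu(t)-mu(s)+1)^a(|mu(s)|+a)^eps...} in this setting.

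Next, I would verify that the Lipschitz bound $\Lip(f_r)\le\delta(|r|+1)^{-\gamma}$ assumed here becomes $\Lip(f_r)\le\delta\,\mu'(r)(|\mu(r)|+1)^{-\gamma}$ when $\mu(t)=t$, which it does trivially because $\mu'(r)=1$ and $\mu(r)=r$. The remaining quantitative hypotheses of Theorem~\ref{thm:global:alpha_t,s = (mu(t)-mu(s)+1)^a(|mu(s)|+a)^eps...} are the inequalities~\eqref{ine:a+b+eps<0|c+d+eps<0}, $a,c\le 0$, $2\eps\le\gamma$, and $\eps-\gamma+1<0$, together with $\delta$ being sufficiently small; all of these appear verbatim in the hypotheses of the corollary.

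Hence, the proof reduces to invoking Theorem~\ref{thm:global:alpha_t,s = (mu(t)-mu(s)+1)^a(|mu(s)|+a)^eps...} and translating the conclusion back via $\mu(t)=t$. The existence of $N\in{]0,1[}$ and of a unique $\phi\in\cX_N$ with $\Psi_\tau(\cV_\phi)\subset\cV_\phi$ for every $\tau\in\R$ follows immediately, and the Lipschitz estimate for $\Psi_{t-s}$ becomes
   $$\norm{\Psi_{t-s}(p_{s,\xi})-\Psi_{t-s}(p_{s,\bxi})}\le\dfrac{DN}{\omg}\,(t-s+1)^a(|s|+1)^\eps\,\norm{\xi-\bxi}$$
for $t\ge s$ and the analogous estimate involving $(s-t+1)^c$ for $t\le s$, which is exactly the displayed conclusion.

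Since everything is a substitution, there is essentially no obstacle; the only thing requiring any attention is making sure that the shape of the Lipschitz constraint on $f_r$ and of the quantitative conditions on $a,b,c,d,\eps,\gamma$ translate correctly, which they do because $\mu'\equiv 1$ removes the Jacobian factor and $\mu$ is the identity.
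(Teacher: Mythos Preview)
Your proposal is correct and matches the paper's approach exactly: the corollary is introduced in the paper with the sentence ``In the next corollary we will consider $\mu(t)=t$'', and no separate proof is given because it is a direct specialization of Theorem~\ref{thm:global:alpha_t,s = (mu(t)-mu(s)+1)^a(|mu(s)|+a)^eps...}. Your verification that $\mu'(r)=1$ and $\mu(r)=r$ make all hypotheses and conclusions translate verbatim is precisely what is needed.
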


\section{Proof of the main theorem}\label{sec:proof:thm|global}

Before doing the proof of Theorem~\ref{thm:global} we need to prove some lemmas.

\begin{lemma} \label{lema:M&N}
   If $\sgm$ and $\omg$ are positive numbers satisfying
      $$2 \sgm + 2 \omg<1 ,$$
   then there exist $M \in\ ]1,2[$ and $N \in\ ]0,1[$ such that
   \begin{equation}\label{def:M+N}
      \sgm = \dfrac{M-1}{M(1+N)}
      \ \ \ \text{ and } \ \ \
      \omg = \dfrac{N}{M(1+N)}.
   \end{equation}
\end{lemma}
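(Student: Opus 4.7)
The plan is to eliminate one unknown and reduce the system to a single quadratic in one variable. Setting $u := M(1+N)$, the two conditions in~\eqref{def:M+N} become $M - 1 = \sgm u$ and $N = \omg u$, that is, $M = 1 + \sgm u$ and $N = \omg u$. Substituting these back into the identity $u = M(1+N) = (1 + \sgm u)(1 + \omg u)$ expresses the problem as finding a positive root of
\begin{equation*}
   q(u) := \sgm \omg \, u^2 - (1-\sgm-\omg)\, u + 1.
\end{equation*}

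First I would check that $q$ has two positive real roots. The discriminant equals $(1-\sgm-\omg)^2 - 4\sgm\omg = (1-2\sgm-2\omg) + (\sgm-\omg)^2$, which is strictly positive by the hypothesis $2\sgm+2\omg<1$. By Vieta's formulas the product of the roots is $1/(\sgm\omg)>0$ and the sum is $(1-\sgm-\omg)/(\sgm\omg)>0$, so both roots are positive.

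The core step is to show that the smaller root $u_-$ lies in a range which automatically yields $M \in \ ]1,2[$ and $N \in \ ]0,1[$. I would evaluate $q$ at the two candidate endpoints: short computations give
\begin{equation*}
   q(1/\sgm) = 2 + \dfrac{2\omg - 1}{\sgm}
   \quad \text{and} \quad
   q(1/\omg) = 2 + \dfrac{2\sgm - 1}{\omg},
\end{equation*}
and both are negative under the assumption $2\sgm+2\omg<1$. Combined with $q(0)=1>0$ and the upward orientation of the parabola, the intermediate value theorem forces $0 < u_- < \mins{1/\sgm,\,1/\omg}$. Defining $M := 1 + \sgm u_-$ and $N := \omg u_-$ then places $M$ in $\ ]1,2[$ and $N$ in $\ ]0,1[$, while~\eqref{def:M+N} holds by construction.

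I do not anticipate any substantive obstacle; the argument is essentially bookkeeping for a quadratic. The only subtlety is selecting the correct root of $q$: the larger root lies beyond both $1/\sgm$ and $1/\omg$, and choosing it would violate the bounds on $M$ or $N$, so one must specifically take $u_-$.
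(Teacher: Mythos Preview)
Your argument is correct and, at its core, coincides with the paper's: the smaller root $u_-$ of your quadratic $q$ gives exactly the paper's explicit formula, since
\[
   M = 1 + \sgm u_-
   = \dfrac{1-\sgm+\omg - \sqrt{1-2\sgm-2\omg + (\sgm-\omg)^2}}{2\omg},
\]
and similarly for $N$. The only genuine difference is in how the bounds $M\in\ ]1,2[$ and $N\in\ ]0,1[$ are established. The paper writes $M$ and $N$ out explicitly, rationalizes the surd expressions, and then pushes through several algebraic inequalities; your sign analysis of $q$ at the points $1/\sgm$ and $1/\omg$ together with the intermediate value theorem locates $u_-$ in $\ ]0,\mins{1/\sgm,1/\omg}[$ and delivers both bounds at once, without ever writing the root down. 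This is a little cleaner and avoids the somewhat delicate manipulations the paper uses to show $M<2$ and $N<1$, at the cost of not producing the closed-form expressions that the paper later refers back to.
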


\begin{proof}
   Clearly, equalities~\eqref{def:M+N} are equivalent to
   \begin{equation}\label{eq:(M-1)/sgm=N/omg=M(1+N)}
      \dfrac{M-1}{\sgm}
      = \dfrac{N}{\omg}
      = M(1+N).
   \end{equation}
   Hence making
   \begin{equation}\label{def:M}
      M
      = \dfrac{1-\sgm+\omg - \sqrt{1-2\sgm-2\omg + (\sgm-\omg)^2}}{2\omg}
   \end{equation}
   and
   \begin{equation}\label{def:N}
      N
      = \dfrac{1-\sgm-\omg - \sqrt{1-2\sgm-2\omg + (\sgm-\omg)^2}}{2\sgm}
   \end{equation}
   we obtain immediately the first equality in~\eqref{eq:(M-1)/sgm=N/omg=M(1+N)}. Taking into account that
   \begin{equation*}
      \begin{aligned}
         (1-\sgm+\omg)^2 - \prtsr{1-2\sgm-2\omg+(\sgm-\omg)^2} = 4\omg,\ \\
         (1-\sgm-\omg)^2 - \prtsr{1-2\sgm-2\omg+(\sgm-\omg)^2} = 4\sgm\omg
      \end{aligned}
   \end{equation*}
   and $\sgm+\omg < 1 - \sgm - \omg$,  we have
   \begin{equation*}
      M
      = \dfrac{4\omg}
         {2\omg\prtsr{1-\sgm+\omg + \sqrt{1-2\sgm-2\omg+(\sgm-\omg)^2}}}
      < \dfrac{2}{1-\sgm+\omg+|\sgm-\omg|}
      \le 2
   \end{equation*}
   and
   \begin{align*}
      N
      & = \dfrac{4\sgm\omg}
         {2\sgm \prts{1-\sgm-\omg + \sqrt{1-2\sgm-2\omg+(\sgm-\omg)^2}}}\\
      & < \dfrac{2 \omg}{1-\sgm-\omg + |\sgm-\omg|}\\
      & < \dfrac{2 \omg}{\sgm+\omg + |\sgm-\omg|}\\
      & \le 1.
   \end{align*}
   Moreover, using the definition of $N$ and $M$ we can put
   \begin{align*}
      1 + \dfrac{1}{N}
      & = \dfrac{1-\sgm+\omg+\sqrt{1-2\sgm-2\omg+(\sgm-\omg)^2}}{2\omg}
      = \dfrac{1}{\omg M}
   \end{align*}
   and this proves the second equality in~\eqref{eq:(M-1)/sgm=N/omg=M(1+N)}. To finish the proof we note it is clear that $N > 0$ and since $M = 1 + \sgm N/\omg$ we have $M > 1$.
\end{proof}

From now on the numbers $M$ and $N$ will be given by~\eqref{def:M} and~\eqref{def:N}. Moreover, the number $N$ mentioned in Theorem~\ref{thm:global} is also given by~\eqref{def:N}.

It is easy to see that $\cX_N$ is a complete metric space with the metric
\begin{equation} \label{def:d(phi,psi)}
   d(\phi,\psi)
   = \sups{\frac{\norm{\phi(t,\xi) - \psi(t,\xi)}}{\norm{\xi}}
      \colon (t,\xi) \in G , \ \xi \ne 0} ,
\end{equation}
for all $\phi, \psi \in \cX_N$.

Making
\begin{equation*}
   G'
   = \bigcup_{s \in \R} \R \times \set{s} \times E_s
   = \R \times G   ,
\end{equation*}
let $\cB $ be the set of all continuous functions $x \colon G' \to X$ such that
\begin{align}
   & x(t,s,0)=0 \ \text{ for all } (t,s) \in \R^2,
      \label{eq:x(t,s,0)=0}\\
   & x(s,s,\xi)= \xi \ \text{ for all } (s,\xi) \in G,
      \label{eq:x(s,s,xi)=xi}\\
   & x(t,s,\xi) \in E_t \ \text{ for all } (t,s,\xi) \in G'
      \label{eq:x(t,s,xi):in:E_t},\\
   & \sups{\dfrac{\norm{x(t,s,\xi)-x(t,s,\bxi)}}
      {\alpha_{t,s} \, \norm{\xi-\bxi}}
      \colon (t,s,\xi), (t,s,\bxi) \in G', \ \xi \ne \bxi} \le M
      \label{eq:Lip[x(t,s,.)]<=M_alpha_t,s}
\end{align}
where $M$ is given by~\eqref{def:M}.

From~\eqref{eq:Lip[x(t,s,.)]<=M_alpha_t,s}, it follows that
\begin{equation}\label{eq:x3}
   \norm{x(t,s,\xi) - x(t,s,\bxi)}
   \le M \alpha_{t,s} \norm{\xi-\bxi}
      \text{ \ for all } (t,s,\xi), (t,s,\bxi) \in G'
\end{equation}
and making $\bxi = 0$ in~\eqref{eq:x3}, from~\eqref{eq:x(t,s,0)=0} we have
\begin{equation}\label{eq:x4}
   \norm{x(t,s,\xi)}
   \le M \alpha_{t,s} \norm{\xi}
      \text{ \ for all } (t,s,\xi) \in G'.
\end{equation}
Defining, for every $x, y \in \cB$,
\begin{equation}\label{def:d'(x,y)}
   d'(x,y) = \sups{
      \dfrac{\norm{x(t,s,\xi)-y(t,s,\xi)}}{\alpha_{t,s} \, \norm{\xi}}
      \colon (t,s,\xi) \in G', \ \xi \ne 0},
\end{equation}
it is easy to see that $\prts{\cB,d'}$ is a complete metric space.

Another complete metric space that we are going to need is $\cC = \cB \times \cX_N$ equipped with the metric defined by
\begin{equation*}
   d''\prts{(x,\phi),(y,\psi)} = d'(x,y) + d(\phi,\psi) \text{ for all } (x,\phi), (y,\psi) \in \cC.
\end{equation*}

To prove~\eqref{eq:psi_tau(V)_C=_V} we need to prove that there is a solution of~\eqref{eq:v'=A(t)v+f(t,v)} in the form
\begin{equation*}
   \prts{x(t,s,\xi),\phi^+(t,x(t,s,\xi)), \phi^-(t,x(t,s,\xi))} \in E_t \times F^+_t \times F^-_t
\end{equation*}
for some $(x, \phi) \in \cC$. From~\eqref{eq:x(t)=T_t,sP_s...},~\eqref{eq:y^+(t)=T_t,sQ^+_s...} and~\eqref{eq:y^-(t)=T_t,sQ^-_s...}, this is equivalent to solving the equations
\begin{align}
   & x(t,s,\xi)
      = T_{t,s}P_s\xi
         + \dint_s^t T_{t,r}P_r f(r,x(r,s,\xi),\phi(r,x(r,s,\xi))) \dr,
         \label{eq:x(t,s,xi)=Txi+integral}\\
   & \phi^+ (t,x(t,s,\xi))
      = T_{t,s}Q^+_s \phi^+(s,\xi)
         + \dint_s^t T_{t,r}Q^+_r f(r,x(r,s,\xi),\phi(r,x(r,s,\xi))) \dr,
         \label{eq:phi1= T eta+integral}\\
   & \phi^- (t,x(t,s,\xi))
      = T_{t,s}Q^-_s \phi^-(s,\xi)
         + \dint_s^t T_{t,r}Q^-_r f(r,x(r,s,\xi),\phi(r,x(r,s,\xi))) \dr.
         \label{eq:phi2= T teta +integral}
\end{align}

Next we state a lemma where we obtain an equivalent form of equations~\eqref{eq:phi1= T eta+integral} and~\eqref{eq:phi2= T teta +integral}.

\begin{lemma}   \label{lema da equival}
   Let $\prts{x,\phi} \in \cC$ such that~\eqref{eq:x(t,s,xi)=Txi+integral} is satisfied. The following properties are equivalent:
   \begin{enumerate}[\lb=$\alph*)$,\lm=6mm]
      \item for every $(t,s,\xi) \in G'$, equations~\eqref{eq:phi1= T eta+integral} and~\eqref{eq:phi2= T teta +integral} hold;
      \item for every $(s,\xi) \in G$,
         \begin{equation}\label{eq:phi1=integral de -oo a s}
            \phi^+ (s,\xi)
            = \int_{-\infty}^s
               T_{s,r} Q^+_r f(r, x(r,s,\xi),\phi( r,x(r,s,\xi)))\dr
         \end{equation}
         and
         \begin{equation}\label{eq:phi2=-integral de s a +oo}
            \phi^- (s,\xi)
            = -\int_s^{+\infty}
               T_{s,r} Q^-_r f(r, x(r,s,\xi),\phi( r,x(r,s,\xi)))\dr.
         \end{equation}
   \end{enumerate}
\end{lemma}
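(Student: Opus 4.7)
My plan is to prove the equivalence by two short, parallel arguments, each based on a single manipulation of an integral identity together with the semigroup property of $T_{t,s}$ and the invariance of the projections $Q^+_s$, $Q^-_s$.

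For the direction $(a)\Rightarrow(b)$, I fix $(s,\xi)\in G$ and apply the inverse $T_{s,t}=T_{t,s}^{-1}$ to both sides of \eqref{eq:phi1= T eta+integral}. Using $T_{s,t}T_{t,s}=\Id$, $T_{s,t}T_{t,r}=T_{s,r}$, and the fact that $\phi^+(s,\xi)\in F^+_s$ so that $Q^+_s$ acts as the identity on it, this rearranges to
$$T_{s,t}\,\phi^+\prts{t,x(t,s,\xi)} = \phi^+(s,\xi)+\dint_s^t T_{s,r}Q^+_r f(r,x(r,s,\xi),\phi(r,x(r,s,\xi)))\dr.$$
Letting $t\to-\infty$, I control the left-hand side via \ref{eq:trich:D2} applied to the $F^+_t$-valued quantity $\phi^+(t,x(t,s,\xi))$ (bounded by $\beta^+_{s,t}$), combined with the magnitude bounds \eqref{eq:||phi(t,xi)||<=...} and \eqref{eq:x4}. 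The resulting estimate is of order $\beta^+_{s,t}\alpha_{t,s}\norm{\xi}$, which vanishes thanks to hypothesis \eqref{eq:lim+oo b^- a = lim-oo b^+ a =0}. Rearranging produces \eqref{eq:phi1=integral de -oo a s}, and the symmetric procedure with $t\to+\infty$ applied to \eqref{eq:phi2= T teta +integral} gives \eqref{eq:phi2=-integral de s a +oo}.

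For the converse $(b)\Rightarrow(a)$, I apply $T_{t,s}Q^+_s$ to both sides of \eqref{eq:phi1=integral de -oo a s} and pull the operator inside the integral using $Q^+_s T_{s,r}=T_{s,r}Q^+_r$ (a rewriting of \ref{eq:split:S4}) followed by $T_{t,s}T_{s,r}=T_{t,r}$. This yields
$$T_{t,s}Q^+_s\phi^+(s,\xi)=\dint_{-\infty}^s T_{t,r}Q^+_r f(r,x(r,s,\xi),\phi(r,x(r,s,\xi)))\dr.$$
Adding $\dint_s^t T_{t,r}Q^+_r f(r,x(r,s,\xi),\phi(r,x(r,s,\xi)))\dr$ to both sides collapses the right-hand side into a single integral over $(-\infty,t]$. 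On the other hand, \eqref{eq:phi1=integral de -oo a s} applied at the point $(t,x(t,s,\xi))\in G$ expresses $\phi^+(t,x(t,s,\xi))$ as the same integral, provided we may identify $x(r,t,x(t,s,\xi))$ with $x(r,s,\xi)$ inside the nonlinearity. This yields \eqref{eq:phi1= T eta+integral}, and \eqref{eq:phi2= T teta +integral} follows identically from \eqref{eq:phi2=-integral de s a +oo}.

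The main obstacle to finalize the argument is the cocycle identity $x(r,t,x(t,s,\xi))=x(r,s,\xi)$ invoked at the last step of $(b)\Rightarrow(a)$; this is not part of the definition of $\cB$, but is forced by the fixed-point equation \eqref{eq:x(t,s,xi)=Txi+integral}. Indeed both $r\mapsto x(r,s,\xi)$ and $r\mapsto x(r,t,x(t,s,\xi))$ satisfy the same Volterra-type integral equation with common value $x(t,s,\xi)$ at $r=t$ (one uses \ref{eq:split:S3} and the semigroup identity on $T_{t,s}$ to rewrite the original equation based at $t$), and the Lipschitz hypotheses \eqref{eq:f is Lip} on $f$ and \eqref{eq:Lip(phi_t)<=N} on $\phi$ yield uniqueness by a standard contraction argument on compact time intervals containing $t$. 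Apart from this, the main conceptual point is the decay argument in $(a)\Rightarrow(b)$, which is precisely where the hypothesis \eqref{eq:lim+oo b^- a = lim-oo b^+ a =0} of Theorem~\ref{thm:global} comes into play, binding together the forward bound $\alpha_{t,s}$ on $x$ with the backward decay $\beta^\pm_{s,t}$ of $T$ along the $Q^\pm$ directions.
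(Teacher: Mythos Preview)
Your argument is correct and follows essentially the same route as the paper: applying $T_{s,t}$ to \eqref{eq:phi1= T eta+integral}--\eqref{eq:phi2= T teta +integral} and letting $t\to\mp\infty$ via the decay hypothesis \eqref{eq:lim+oo b^- a = lim-oo b^+ a =0} for $(a)\Rightarrow(b)$, and applying $T_{t,s}$ to \eqref{eq:phi1=integral de -oo a s}--\eqref{eq:phi2=-integral de s a +oo} plus the cocycle identity $x(r,t,x(t,s,\xi))=x(r,s,\xi)$ for $(b)\Rightarrow(a)$. The paper additionally opens with an explicit check that the improper integrals in $(b)$ converge (via the bound $M(1+N)\omega\|\xi\|$), whereas you go further than the paper in actually justifying the cocycle identity, which the paper simply uses without comment.
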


\begin{proof}
   First we must prove that the integrals in~\eqref{eq:phi1=integral de -oo a s} and~\eqref{eq:phi2=-integral de s a +oo} are convergent. From~\eqref{eq:||f(t,x)||<=...},~\eqref{eq:||phi(t,xi)||<=...} and~\eqref{eq:x4} we obtain
   \begin{equation}\label{||f(r,x,phi)||<=...}
      \begin{split}
         \norm{f(r, x(r,s,\xi),\phi( r,x(r,s,\xi)))}
         & \le \Lip(f_r) \norm{x(r,s,\xi)+\phi( r,x(r,s,\xi))}\\
         & \le \Lip(f_r) \prts{\norm{x(r,s,\xi)}+\norm{\phi( r,x(r,s,\xi))}}\\
         & \le \Lip(f_r) \prts{\norm{x(r,s,\xi)}+ N \norm{x(r,s,\xi)}}\\
         & \le M(1+N) \Lip(f_r) \alpha_{r,s} \norm{\xi}
      \end{split}
   \end{equation}
   for every $r,s \in \R$ and using~\ref{eq:trich:D2},~\eqref{||f(r,x,phi)||<=...},~\eqref{def:omg} and~\ref{eq:trich:D3} we have
   \begin{align*}
      & \int_{-\infty}^s
         \norm{T_{s,r} Q^+_r f(r, x(r,s,\xi),\phi( r,x(r,s,\xi)))}\dr\\
      & \le \int_{-\infty}^s \norm{T_{s,r} Q^+_r}
                \norm{f(r, x(r,s,\xi),\phi( r,x(r,s,\xi)))}\dr \\
      & \le M(1+N) \norm{\xi} \int_{-\infty}^s \beta^+_{s,r} \Lip(f_r) \alpha_{r,s} \dr\\
      & \le M(1+N) \omg \norm{\xi}
   \end{align*}
   and
   \begin{align*}
      & \int_s^{+\infty} \norm{T_{s,r} Q^-_r
                f(r, x(r,s,\xi),\phi( r,x(r,s,\xi)))}\dr\\
      & \le \int_s^{+\infty} \norm{T_{s,r} Q^-_r}
         \norm{f(r, x(r,s,\xi),\phi( r,x(r,s,\xi)))} \dr \\
      & \le M(1+N) \norm{\xi}
         \int_s^{+\infty} \beta^-_{s,r} \Lip(f_r) \alpha_{r,s}\dr\\
     & \le M(1+N) \omg \norm{\xi}
   \end{align*}
   for every $(s, \xi) \in G$.  Thus the integrals~\eqref{eq:phi1=integral de -oo a s} and~\eqref{eq:phi2=-integral de s a +oo} are convergent.

   Now we prove that $a) \Rightarrow b)$. Suppose that~\eqref{eq:phi1= T eta+integral} and~\eqref{eq:phi2= T teta +integral} hold for all $(t,s,\xi) \in G'$. Then, from~\eqref{eq:phi1= T eta+integral} we have
   \begin{align*}
      \phi^+ (s,\xi)
      & = T_{s,t} \phi^+(t,x(t,s,\xi))
         - \int^t_s T_{s,t} T_{t,r} Q^+_r
            f(r, x(r,s,\xi), \phi( r,x(r,s,\xi)))\dr\\
      & = T_{s,t} Q^+_t \phi(t,x(t,s,\xi))
            - \int^t_s T_{s,r} Q^+_r f(r, x(r,s,\xi), \phi(r,x(r,s,\xi)))\dr.
   \end{align*}
   Since by~\ref{eq:trich:D2},~\eqref{eq:||phi(t,xi)||<=...} and~\eqref{eq:x4} we have
   \begin{align*}
      \norm{T_{s,t}Q^+_t \phi(t,x(t,s, \xi))}
      & \le \beta^+_{s,t} \norm{\phi(t,x(t,s, \xi))}\\
      & \le N \beta^+_{s,t} \norm{x(t,s,\xi)}\\
      & \le MN \norm{\xi} \beta^+_{s,t} \alpha_{t,s}~,
   \end{align*}
   by~\eqref{eq:lim+oo b^- a = lim-oo b^+ a =0}, making $t \to -\infty$, we conclude that
      $$ \lim_{t \to -\infty} T_{s,t} Q^+_t \phi(t,x(t, s,\xi))
         = 0$$
   and this implies
   \begin{align*}
      \phi^+ (s,\xi)
      & = - \int_s^{-\infty}
         T_{s,r} Q^+_r f(r, x(r,s,\xi), \phi( r,x(r,s,\xi)))\dr \\
      & = \int_{-\infty}^s
         T_{s,r} Q^+_r f(r, x(r,s,\xi), \phi( r,x(r,s,\xi)))\dr,
   \end{align*}
   i.e.,~\eqref{eq:phi1=integral de -oo a s} holds.

   Similarly, from~\eqref{eq:phi2= T teta +integral} we have
   \begin{align*}
      \phi^-(s,\xi)
      & =  T_{s,t} \phi^-(t, x(t,s, \xi)) - \int^t_s
         T_{s,t} T_{t,r} Q^-_r f(r, x(r,s,\xi),\phi(r,x(r,s,\xi)))\dr\\
      & = T_{s,t} Q^-_t \phi(t, x(t,s, \xi)) - \int^t_s
         T_{s,t} T_{t,r} Q^-_r f(r, x(r,s,\xi),\phi(r,x(r,s,\xi)))\dr
   \end{align*}
   and by~\ref{eq:trich:D3},~\eqref{eq:||phi(t,xi)||<=...} and~\eqref{eq:x4} it follows that
   \begin{align*}
      \norm{T_{s,t} Q^-_t \phi(t, x(t,s, \xi))}
      & \le \beta^-_{s,t} \norm{\phi(t, x(t,s, \xi))}\\
      & \le N \beta^-_{s,t} \norm{x(t,s, \xi)}\\
      & \le MN \norm{\xi} \beta^-_{s,t} \alpha_{t,s}.
   \end{align*}
   Letting $t \to + \infty$ we have
      $$ \lim_{t \to +\infty} T_{s,t} Q^-_t \phi(t,x(t, s,\xi))= 0$$
   and we obtain
   \begin{align*}
       \phi^- (s,\xi)
       =  -  \int_s^{+\infty} T_{s,r}  Q^-_r
               f(r, x(r,s,\xi),\phi( r,x(r,s,\xi))) \dr
   \end{align*}
   for every $(s, \xi) \in G$. Hence $a) \Rightarrow b)$.

   Now we will prove that $b) \Rightarrow a)$. Assuming that for every $(s, \xi) \in G$ identities~\eqref{eq:phi1=integral de -oo a s} and~\eqref{eq:phi2=-integral de s a +oo} hold, applying $T_{t,s}$ to both sides of equation~\eqref{eq:phi1=integral de -oo a s} we have
      $$ T_{t,s} \phi^+ (s,\xi)
         = \int_{-\infty}^s
            T_{t,r}  Q^+_r f(r, x(r,s,\xi), \phi( r,x(r,s,\xi)))\dr$$
   and this implies
   \begin{align*}
         & T_{t,s} \phi^+ (s,\xi) + \int_s^t T_{t,r}  Q^+_r
            f(r, x(r,s,\xi), \phi( r,x(r,s,\xi)))\dr \\
         & = \int_{-\infty}^t T_{t,r}  Q^+_r f(r, x(r,s,\xi),
            \phi( r,x(r,s,\xi)))\dr \\
         & = \int_{-\infty}^t T_{t,r}  Q^+_r f(r, x(r,t,x(t,s,\xi)),
            \phi( r,x(r,t,x(t,s,\xi))))\dr \\
         & = \phi^+ ( t, x(t,s,\xi))
   \end{align*}
   for every $\prts{t, s, \xi} \in G'$. In a similar way we have
      $$ T_{t,s} \phi^- (s,\xi)
         = -\int_s^{+\infty}
            T_{t,s} T_{s,r}  Q^-_r f(r, x(r,s,\xi), \phi( r,x(r,s,\xi)))\dr$$
   and thus
   \begin{align*}
         & T_{t,s} \phi^- (s,\xi) + \int_s^t T_{t,r}  Q^-_r
            f(r, x(r,s,\xi), \phi( r,x(r,s,\xi)))\dr\\
         & = - \int_t^{+\infty} T_{t,r}  Q^-_r f(r, x(r,s,\xi),
            \phi( r,x(r,s,\xi)))\dr \\
         & = - \int_t^{+\infty} T_{t,r}  Q^-_r f\prts{r, x(r,t,x(t,s,\xi)),
            \phi\prts{r,x\prts{r,t,x\prts{t,s,\xi}}}}\dr \\
         & = \phi^- ( t, x(t,s,\xi))
   \end{align*}
   for every $(t, s , \xi) \in G' $. Therefore $b) \Rightarrow a)$ and this completes the proof of the lemma.
\end{proof}

Consider in $\cC$ the operator $J$ such that, to each $(x,\phi) \in \cC$, assigns a function $J(x,\phi) \colon G' \to X $ defined, for every $\prts{t,s,\xi} \in G'$,  by
\begin{equation*}
   \prtsr{J (x, \phi)}(t,s,\xi)
   = T_{t,s}P_s\xi
      + \dint_s^t T_{t,r}P_r f(r,x(r,s,\xi),\phi(r,x(r,s,\xi))) \dr.
\end{equation*}

\begin{lemma}
   For every $\prts{x,\phi} \in \cC$, we have
      $$ J(x,\phi) \in \cB.$$
\end{lemma}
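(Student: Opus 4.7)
The plan is to verify, for $y := J(x,\phi)$, the four defining conditions of $\cB$: continuity, $y(t,s,0)=0$, $y(s,s,\xi)=\xi$, $y(t,s,\xi)\in E_t$, and the Lipschitz bound~\eqref{eq:Lip[x(t,s,.)]<=M_alpha_t,s}. The continuity of $y$ follows from the continuity of $T_{t,r}$, $P_r$, $f$, $x$ and $\phi$, together with standard continuous dependence of the integral on parameters. For the second property, $x(r,s,0)=0$ by~\eqref{eq:x(t,s,0)=0} and $\phi(r,0)=0$ by~\eqref{eq:phi(t,0)=0}, so $f(r,0,0)=0$ by~\eqref{eq:f(t,0)=0} and both terms defining $y(t,s,0)$ vanish. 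For the third, since $(s,\xi)\in G$ gives $\xi\in E_s$, we have $P_s\xi=\xi$ and $T_{s,s}=\Id$, whence $y(s,s,\xi)=\xi$. For the fourth, $P_t T_{t,r} = T_{t,r}P_r$ by~\ref{eq:split:S3} shows that the integrand lies in $E_t$, and $E_t$ is closed, so $y(t,s,\xi)\in E_t$.

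The core step is the Lipschitz estimate. I would write
\begin{equation*}
   y(t,s,\xi)-y(t,s,\bxi)
   = T_{t,s}P_s(\xi-\bxi)
     + \int_s^t T_{t,r}P_r\Delta_r\dr,
\end{equation*}
where $\Delta_r := f(r,x(r,s,\xi),\phi(r,x(r,s,\xi))) - f(r,x(r,s,\bxi),\phi(r,x(r,s,\bxi)))$. By~\ref{eq:trich:D1}, the first term is bounded by $\alpha_{t,s}\norm{\xi-\bxi}$. For $\Delta_r$, I would use the Lipschitz property~\eqref{eq:||f(t,x)-f(t,y)||<=...} of $f_r$, combined with~\eqref{eq:x3} for $x$ and~\eqref{eq:||phi(t,xi)-phi(t,bxi)||<=...} for $\phi$, to obtain
\begin{equation*}
   \norm{\Delta_r}
   \le \Lipfr\prts{\norm{x(r,s,\xi)-x(r,s,\bxi)} + \norm{\phi(r,x(r,s,\xi))-\phi(r,x(r,s,\bxi))}}
   \le M(1+N)\,\Lipfr\,\alpha_{r,s}\,\norm{\xi-\bxi}.
\end{equation*}

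Applying~\ref{eq:trich:D1} inside the integral and invoking the definition~\eqref{def:sgm} of $\sgm$, I would conclude
\begin{equation*}
   \norm{y(t,s,\xi)-y(t,s,\bxi)}
   \le \alpha_{t,s}\norm{\xi-\bxi}
      + M(1+N)\norm{\xi-\bxi}\abs{\int_s^t \alpha_{t,r}\Lipfr \alpha_{r,s}\dr}
   \le \prts{1 + M(1+N)\sgm}\alpha_{t,s}\norm{\xi-\bxi}.
\end{equation*}
The finishing touch is the algebraic identity from Lemma~\ref{lema:M&N}: the defining relation $\sgm = (M-1)/(M(1+N))$ gives $M(1+N)\sgm = M-1$, so $1 + M(1+N)\sgm = M$, yielding exactly the required bound $M\alpha_{t,s}\norm{\xi-\bxi}$. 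The only nontrivial point here is this calibration, which is precisely why $M$ was chosen as in~\eqref{def:M}; everything else is a routine verification using the defining properties of $\cB$ and $\cX_N$.
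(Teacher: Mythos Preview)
Your proof is correct and follows essentially the same approach as the paper: you verify the four defining conditions of $\cB$ directly, bound the difference $\Delta_r$ via \eqref{eq:||f(t,x)-f(t,y)||<=...}, \eqref{eq:||phi(t,xi)-phi(t,bxi)||<=...} and \eqref{eq:x3} to get $M(1+N)\Lipfr\alpha_{r,s}\norm{\xi-\bxi}$, and then use the calibration $\sgm = (M-1)/(M(1+N))$ from Lemma~\ref{lema:M&N} to close the estimate at exactly $M\alpha_{t,s}\norm{\xi-\bxi}$. The only minor addition is that you explicitly address continuity and the reasons behind $y(s,s,\xi)=\xi$ and $y(t,s,\xi)\in E_t$, which the paper leaves implicit.
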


\begin{proof}
   Given $(x, \phi) \in \cC$, from~\eqref{eq:x(t,s,0)=0},~\eqref{eq:phi(t,0)=0} and~\eqref{eq:f(t,0)=0} it follows immediately that $J(x, \phi)(t,s,0)=0$ for every $(t,s) \in \R^2$ and by definition we have $J(x,\phi)(s,s,\xi)= \xi$ for all $(s, \xi) \in G$ and $J(x,\phi)(t,s,\xi) \in E_t$ for all $(t,s,\xi) \in G'$.

   Moreover, for all $(t, s, \xi), (t,s,\bxi) \in G'$ and using~\ref{eq:trich:D1} it follows that
   \begin{align*}
      \norm{J(x,\phi)(t,s,\xi)-J(x,\phi)(t,s,\bxi)}
      & \le \norm{T_{t,s} P_s} \norm{\xi - \bxi}
         + \dint_s^t \norm{T_{t,r}P_r} \,\cdot\, \gamma_{r,s,\xi,\bxi} \dr\\
      & \le \alpha_{t,s} \norm{\xi - \bxi}
         + \dint_s^t \alpha_{t,r} \,\cdot\, \gamma_{r,s,\xi,\bxi} \ \dr,
   \end{align*}
   where
   \begin{align*}
      \gamma_{r,s,\xi,\bxi}
      := \norm{f(r,x(r,s,\xi),\phi(r,x(r,s,\xi)))
         - f(r,x(r,s,\bxi),\phi(r,x(r,s,\bxi)))}.
   \end{align*}
   From~\eqref{eq:||f(t,x)-f(t,y)||<=...},~\eqref{eq:||phi(t,xi)-phi(t,bxi)||<=...} and~\eqref{eq:x3} we have
   \begin{equation}
      \begin{split} \label{eq:gama r,s}
         & \gamma_{r,s,\xi,\bxi}\\
         & \le \Lip(f_r) \prts{\norm{x(r,s,\xi) - x(r,s,\bxi)}
            + \norm{\phi(r,x(r,s,\xi))-\phi(r,s,x(r,s,\bxi)) }}\\
         & \le  \Lip(f_r) \prts{\norm{x(r,s,\xi) - x(r,s,\bxi)}
            + N \norm{x(r,s,\xi))-x(r,s,\bxi)) }}\\
         & = (1+N) \Lip(f_r) \norm{x(r,s,\xi) - x(r,s,\bxi)}\\
         & \le  M(1+N) \norm{\xi-\bxi} \Lip(f_r) \alpha_{r,s}
      \end{split}
   \end{equation}
   and so by~\eqref{def:sgm} and Lemma~\ref{lema:M&N} it follows that
   \begin{align*}
      & \norm{J(x,\phi)(t,s,\xi) - J(x,\phi)(t,s,\bxi)}\\
      & \le \alpha_{t,s} \norm{\xi -\bxi}+ M(1+N)\norm{\xi -\bxi} \dint_s^t \alpha_{t,r}
           \Lip(f_r)  \alpha_{r,s} \dr\\
      & \le \prts{1 + M(1+N)\sgm} \alpha_{t,s} \norm{ \xi-\bxi }\\
      & = M \alpha_{t,s} \norm{\xi -\bxi}
   \end{align*}
   for every $(x,\phi) \in \cC$ and every $(t,s,\xi), (t,s,\bxi) \in G'$.
   Then considering $\xi \ne \bxi$ we have
      $$ \dfrac{ \norm{J(x,\phi)(t,s,\xi)-J(x,\phi)(t,s,\bxi)}}
            {\alpha_{t,s}\norm{ \xi-\bxi }}
         \le M.$$
   Hence $J(x,\phi)$ satisfies~\eqref{eq:x(t,s,0)=0},~\eqref{eq:x(s,s,xi)=xi},~\eqref{eq:x(t,s,xi):in:E_t} and~\eqref{eq:Lip[x(t,s,.)]<=M_alpha_t,s} and this proves that $J(x,\phi) \in \cB$ for all $(x,\phi) \in \cC$.
\end{proof}

In $\cC$ define the operator $L$ that assigns to every $(x,\phi) \in \cC$ a function
   $$ L(x,\phi) \colon G \to X   $$
defined, for every $(s,\xi) \in G$, by
   $$ \prtsr{L(x,\phi)}(s,\xi)
      = \prtsr{L^+(x,\phi)}(s,\xi) + \prtsr{L^-(x,\phi)}(s,\xi),$$
where
\begin{equation*}
      \prtsr{L^+(x, \phi)} (s, \xi)
      = \dint_{-\infty}^s
         T_{s,r} Q^+_r f(r, x(r,s,\xi), \phi(r,x(r,s,\xi))) \dr
\end{equation*}
and
\begin{equation*}
   \prtsr{L^-(x, \phi)} (s, \xi)
   = -\dint_s^{+\infty} T_{s,r} Q^-_r f(r, x(r,s,\xi), \phi(r,x(r,s,\xi)))\dr.
\end{equation*}

\begin{lemma}
   For every $\prts{x,\phi} \in \cC$, we have
      $$ L(x,\phi) \in \cX_N.$$
\end{lemma}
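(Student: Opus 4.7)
The plan is to verify the four defining conditions of $\cX_N$ for $L(x,\phi)$, namely continuity, vanishing at $\xi=0$, membership in $F^+_t \oplus F^-_t$, and the Lipschitz bound \eqref{eq:Lip(phi_t)<=N} with constant $N$. The main work is the Lipschitz estimate; the other three conditions are essentially bookkeeping once the integrals are known to converge, which has already been established in the proof of Lemma~\ref{lema da equival}.

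First I would check that $L(x,\phi)(s,0) = 0$ for every $s \in \R$. Using~\eqref{eq:x(t,s,0)=0}, \eqref{eq:phi(t,0)=0} and~\eqref{eq:f(t,0)=0}, the integrand in each of $L^+$ and $L^-$ vanishes identically when $\xi=0$, so \eqref{eq:phi(t,0)=0} holds. Next I would check \eqref{eq:phi(t,xi)_in_F^+_t+F^-_t}: by~\ref{eq:split:S4} and the splitting identities, $T_{s,r}Q^+_r = Q^+_s T_{s,r}$, so every value of $L^+(x,\phi)(s,\xi)$ lies in $F^+_s$, and analogously every value of $L^-(x,\phi)(s,\xi)$ lies in $F^-_s$; thus $L(x,\phi)(s,\xi) \in F^+_s \oplus F^-_s$. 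Continuity of $L(x,\phi)$ as a map on $G$ would be obtained from the continuity of the integrands (composing the continuous $x$, $\phi$, $f$) together with the uniform integrable majorants already constructed in Lemma~\ref{lema da equival}, which permit dominated convergence.

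The main step is the Lipschitz estimate. Fix $(s,\xi),(s,\bxi) \in G$ with $\xi\ne \bxi$. I would recycle the pointwise bound~\eqref{eq:gama r,s} from the previous lemma, namely
\begin{equation*}
   \gamma_{r,s,\xi,\bxi}
   \le M(1+N)\,\Lip(f_r)\,\alpha_{r,s}\,\norm{\xi-\bxi},
\end{equation*}
combined with the trichotomy bounds~\ref{eq:trich:D2} and~\ref{eq:trich:D3}. This gives
\begin{align*}
   \norm{L^+(x,\phi)(s,\xi)-L^+(x,\phi)(s,\bxi)}
   &\le M(1+N)\norm{\xi-\bxi}\int_{-\infty}^{s}\beta^+_{s,r}\Lip(f_r)\alpha_{r,s}\dr,\\
   \norm{L^-(x,\phi)(s,\xi)-L^-(x,\phi)(s,\bxi)}
   &\le M(1+N)\norm{\xi-\bxi}\int_{s}^{+\infty}\beta^-_{s,r}\Lip(f_r)\alpha_{r,s}\dr.
\end{align*}
Adding the two bounds, taking the supremum in $s$ and invoking~\eqref{def:omg}, I obtain
\begin{equation*}
   \norm{L(x,\phi)(s,\xi)-L(x,\phi)(s,\bxi)}
   \le M(1+N)\,\omg\,\norm{\xi-\bxi}.
\end{equation*}

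Finally I would apply Lemma~\ref{lema:M&N}, which gives $\omg = N/[M(1+N)]$, so $M(1+N)\omg = N$. Therefore the quotient in~\eqref{eq:Lip(phi_t)<=N} is at most $N$, and all four conditions defining $\cX_N$ are satisfied, yielding $L(x,\phi)\in\cX_N$. The only subtle point is really the algebraic match between the constants $M$, $N$ and $\omg$ engineered by Lemma~\ref{lema:M&N}; everything else is an estimation pattern already used for $J(x,\phi)$.
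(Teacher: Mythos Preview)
Your proposal is correct and follows essentially the same route as the paper: verify \eqref{eq:phi(t,0)=0} and \eqref{eq:phi(t,xi)_in_F^+_t+F^-_t} directly, then use the estimate \eqref{eq:gama r,s} together with \ref{eq:trich:D2}, \ref{eq:trich:D3}, \eqref{def:omg} and Lemma~\ref{lema:M&N} to obtain the Lipschitz bound $N$. Your version is in fact slightly more thorough, since you also address continuity of $L(x,\phi)$ via dominated convergence, a point the paper's proof leaves implicit.
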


\begin{proof}
   From~\eqref{eq:x(t,s,0)=0},~\eqref{eq:phi(t,0)=0} and~\eqref{eq:f(t,0)=0} it follows that $\prtsr{L(x,\phi)}(t,0)= 0$. Moreover, by definition we have $\prtsr{L(x,\phi)}(t,\xi) \in F^+_t \oplus F^-_t$.

   From~\ref{eq:trich:D2},~\ref{eq:trich:D3} and~\eqref{eq:gama r,s} it follows for every $(s,\xi), (s,\bxi) \in G$ that
   \begin{align*}
      \| L^+ (x, \phi) (s, \xi) - L^+(x, \phi) ( s, \bxi) \|
      & \le \dint_{-\infty}^s \|T_{s,r} Q^+_r \| \ \gamma_{r,s,\xi,\bxi} \ \dr\\
      &  \le  M(1+N) \norm{\xi-\bxi}
         \dint_{-\infty}^s \beta^+_{s,r} \Lip(f_r) \alpha_{r,s}  \dr
   \end{align*}
   and
   \begin{align*}
      \| L^- (x, \phi) (s, \xi) - L^-(x, \phi) ( s, \bxi) \|
      & \le \dint_s^{+\infty} \|T_{s,r} Q^-_r\| \ \gamma_{r,s,\xi,\bxi} \ \dr\\
      & \le M(1+N) \norm{\xi-\bxi} \dint_s^{+\infty} \beta^-_{s,r} \Lip(f_r) \alpha_{r,s}\dr
   \end{align*}
   and so, using~\eqref{def:omg} and Lemma~\ref{lema:M&N} we have
   \begin{align*}
      & \norm{\prtsr{L(x,\phi)}(s,\xi) -\prtsr{L(x,\phi)}(s,\bxi)}\\
      & \le \norm{\prtsr{L^+(x,\phi)}(s,\xi) -\prtsr{L^+(x,\phi)}(s,\bxi)}
         + \norm{\prtsr{L^-(x,\phi)}(s,\xi) -\prtsr{L^-(x,\phi)}(s,\bxi)}\\
      & \le M(1+N)\norm{\xi-\bxi}
         \prts{\dint_{-\infty}^s \beta^+_{s,r} \Lipfr \alpha_{r,s} \dr
         + \dint_s^{+\infty} \beta^-_{s,r} \Lipfr \alpha_{r,s} \dr}\\
      & \le M(1+N)\omg\norm{\xi-\bxi}\\
      & = N \norm{\xi-\bxi},
   \end{align*}
   and the proof is complete.
\end{proof}

\begin{lemma}
   For every $(x,\phi), (y,\psi) \in \cC$ we have
   \begin{equation}\label{eq:d'(J(x,phi),J(y,psi))<=...}
      d'\prts{J(x,\phi), J(y,\psi)}
         \le  \sgm  \prtsr{(1+N) d'(x,y)  + M  d(\phi,\psi)}
   \end{equation}
   and
   \begin{equation}\label{d(L(x,phi),L(y,psi))<=...}
      d \prts{L(x, \phi),L(y,\psi)}
      \le \omg \prtsr{(1+N) d'(x,y)  + M d(\phi,\psi)}.
   \end{equation}
\end{lemma}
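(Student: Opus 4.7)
The plan is to estimate both quantities via the common intermediate bound on $\|f(r,x(r,s,\xi),\phi(r,x(r,s,\xi))) - f(r,y(r,s,\xi),\psi(r,y(r,s,\xi)))\|$, and then integrate against $\alpha_{t,r}$ in the first case and against $\beta^+_{s,r}$, $\beta^-_{s,r}$ in the second. The structural observation is that both $J$ and $L$ have the same inner $f$-difference, so essentially the same Lipschitz bookkeeping handles both, with the constants $\sgm$ and $\omg$ arising only at the last step from the definitions~\eqref{def:sgm} and~\eqref{def:omg}.

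First I would apply the Lipschitz property~\eqref{eq:||f(t,x)-f(t,y)||<=...} to get
$$\|f(r,x,\phi(r,x)) - f(r,y,\psi(r,y))\| \le \Lipfr\prts{\norm{x(r,s,\xi)-y(r,s,\xi)} + \norm{\phi(r,x(r,s,\xi))-\psi(r,y(r,s,\xi))}}.$$
The second summand is the crucial one. I would split it by adding and subtracting $\phi(r,y(r,s,\xi))$, giving
$$\norm{\phi(r,x(r,s,\xi))-\phi(r,y(r,s,\xi))} + \norm{\phi(r,y(r,s,\xi))-\psi(r,y(r,s,\xi))}.$$
Estimate the first summand by~\eqref{eq:||phi(t,xi)-phi(t,bxi)||<=...}, so it is at most $N\norm{x(r,s,\xi)-y(r,s,\xi)}$, and estimate the second summand by the definition~\eqref{def:d(phi,psi)} of $d$, which gives $d(\phi,\psi)\norm{y(r,s,\xi)}$. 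Now use~\eqref{def:d'(x,y)} to control $\norm{x(r,s,\xi)-y(r,s,\xi)} \le d'(x,y)\,\alpha_{r,s}\norm{\xi}$ and~\eqref{eq:x4} applied to $y$ to control $\norm{y(r,s,\xi)} \le M\alpha_{r,s}\norm{\xi}$. Packaging these together gives
$$\|f(r,\ldots) - f(r,\ldots)\| \le \Lipfr \,\alpha_{r,s}\, \norm{\xi}\, \prtsr{(1+N)d'(x,y) + M d(\phi,\psi)}.$$

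For~\eqref{eq:d'(J(x,phi),J(y,psi))<=...}, since the $T_{t,s}P_s\xi$ terms cancel in $J(x,\phi)-J(y,\psi)$, I would bound $\norm{[J(x,\phi)-J(y,\psi)](t,s,\xi)}$ by $\abs{\int_s^t \norm{T_{t,r}P_r}\, \|f(r,\ldots)-f(r,\ldots)\|\dr}$, use~\ref{eq:trich:D1}, insert the $f$-estimate above, divide by $\alpha_{t,s}\norm{\xi}$, and invoke~\eqref{def:sgm}. For~\eqref{d(L(x,phi),L(y,psi))<=...}, the argument is completely analogous: split $L = L^+ + L^-$, apply~\ref{eq:trich:D2} and~\ref{eq:trich:D3} to the two integrals, plug in the same $f$-estimate, take the supremum over $s$, and recognize the resulting integral as $\omg$ via~\eqref{def:omg}.

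The only mild subtlety, and the step where I would be most careful, is the insertion of the intermediate term $\phi(r,y(r,s,\xi))$: one must apply $\phi$ (not $\psi$) to the new argument $y(r,s,\xi)$ so that the Lipschitz constant $N$ of $\phi$ from~\eqref{eq:Lip(phi_t)<=N} can be used, and then the remaining discrepancy between $\phi$ and $\psi$ is evaluated at the single point $(r,y(r,s,\xi))$, which is exactly what the $d(\phi,\psi)$ metric measures. Everything else is integrating the pointwise estimate and reading off $\sgm$ and $\omg$ from their definitions. No convergence issue arises in $L$ because the same bounds that made~\eqref{eq:phi1=integral de -oo a s} and~\eqref{eq:phi2=-integral de s a +oo} convergent in the previous lemma apply verbatim here.
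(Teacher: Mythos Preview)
Your proposal is correct and matches the paper's proof essentially line for line: the same triangle-inequality split with the intermediate term $\phi(r,y(r,s,\xi))$, the same use of~\eqref{eq:||phi(t,xi)-phi(t,bxi)||<=...},~\eqref{def:d(phi,psi)},~\eqref{def:d'(x,y)} and~\eqref{eq:x4} to obtain the pointwise bound $\Lipfr\,\alpha_{r,s}\norm{\xi}\prtsr{(1+N)d'(x,y)+M d(\phi,\psi)}$, and then integration against $\alpha_{t,r}$ (resp.\ $\beta^\pm_{s,r}$) to read off $\sgm$ (resp.\ $\omg$).
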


\begin{proof}
   For every $(r,s,\xi) \in G'$, putting
      $$ \overline{\gamma}_{r,s,\xi}
         := \| f(r,x(r,s,\xi), \phi(r,x(r,s,\xi))) -
            f(r,y(r,s,\xi), \psi(r,y(r,s,\xi))) \|,  $$
   and using~\ref{eq:trich:D1} we have
      $$ \|J(x,\phi)(t,s, \xi)- J(y,\psi)(t,s,\xi) \|
         \le \abs{\dint_s^t \|T_{t,r} P_r\| \
            \overline{\gamma}_{r,s,\xi} \dr}
         \le \abs{\dint_s^t \alpha_{t,r} \ \overline{\gamma}_{r,s,\xi} \dr}.$$
   By~\eqref{eq:||f(t,x)-f(t,y)||<=...},~\eqref{eq:||phi(t,xi)-phi(t,bxi)||<=...},~\eqref{def:d'(x,y)}, ~\eqref{def:d(phi,psi)} and~\eqref{eq:x4} we obtain
   \begin{equation}\label{eq:f(r,x,phi)-f(r,y,psi)}
      \begin{split}
         \overline{\gamma}_{r,s,\xi}
          & = \| f(r,x(r,s,\xi), \phi(r,x(r,s,\xi))) -
                  f(r,y(r,s,\xi), \psi(r,y(r,s,\xi))) \| \\
          & \le \Lip(f_r) \prtsr{\| x(r,s,\xi) - y(r,s,\xi) \| +
                \| \phi( r,x(r,s,\xi))  - \psi(r,y(r,s,\xi)) \|}\\
          & \le \Lip(f_r) [\| x(r,s,\xi) - y(r,s,\xi) \|
               + \| \phi( r,x(r,s,\xi))  - \phi( r,y(r,s,\xi))\|\\
            & \qquad  +
                \|\phi( r,y(r,s,\xi))- \psi(r,y(r,s,\xi)) \|]\\
          & \le \Lip(f_r) \prtsr{(1+N) \norm{x(r,s,\xi) - y(r,s,\xi)}
            + d( \phi,\psi) \|y(r,s,\xi)) \|}\\
          & \le \Lip(f_r) \prtsr{d'(x,y) \alpha_{r,s} \norm{\xi} (1+N) +
                d( \phi,\psi) M \alpha_{r,s} \norm{\xi} }\\
          & \le \Lip(f_r) \alpha_{r,s} \norm{\xi} \prtsr{(1+N) d'(x,y)  +
               M  d( \phi,\psi)},
      \end{split}
   \end{equation}
   it follows by~\eqref{def:sgm} that
   \begin{align*}
      & \|J(x,\phi)(t, s, \xi)- J(y,\psi)(t, s, \xi) \|\\
      & \le \dint_s^t \alpha_{t,r} \Lip(f_r) \alpha_{r,s} \dr \
         \norm{\xi} \prtsr{(1+N) d'(x,y)  + M  d(\phi,\psi)}\\
      & \le \alpha_{t,s} \sgm \norm{\xi}
         \prtsr{(1+N) d'(x,y)  + M  d(\phi,\psi)},
   \end{align*}
   for every $(t,s,\xi) \in G'$. Thus from~\eqref{def:d'(x,y)} we get~\eqref{eq:d'(J(x,phi),J(y,psi))<=...}.

   On the other hand, using again~\ref{eq:trich:D2} and~\eqref{eq:f(r,x,phi)-f(r,y,psi)} we have
   \begin{align*}
      & \|L^+(x,\phi)(s, \xi)- L^+(y,\psi)(s,\xi)\|\\
      & \le \dint_{-\infty}^s
         \|T_{s,r} Q^+_r\| \ \overline{\gamma}_{r,s,\xi}\dr\\
      & \le \norm{\xi} \prtsr{(1+N) d'(x,y)  + M  d(\phi,\psi)}
         \dint_{-\infty}^s \beta^+_{s,r} \Lip(f_r) \alpha_{r,s} \dr
   \end{align*}
   and also from~\ref{eq:trich:D3} and~\eqref{eq:f(r,x,phi)-f(r,y,psi)} it follows that
   \begin{align*}
      & \|L^-(x,\phi)(s, \xi)- L^-(s,\psi)(t,\xi)\| \\
      & \le \dint_s^{+\infty}
         \|T_{s,r} Q^-_r\| \ \overline{\gamma}_{r,s,\xi}\dr\\
      & \le \norm{\xi} \prtsr{(1+N) d'(x,y)  + M  d( \phi,\psi)}
         \dint_s^{+\infty} \beta^-_{s,r} \Lip(f_r) \alpha_{r,s} \dr
   \end{align*}
   and thus from~\eqref{def:omg} we obtain
      $$ \|L(x,\phi)(s, \xi)- L(s,\psi)(t,\xi)\|
         \le \norm{\xi} \omg \prtsr{(1+N) d'(x,y)  + M  d( \phi,\psi)}.$$
   Therefore, from~\eqref{def:d(phi,psi)} we get~\eqref{d(L(x,phi),L(y,psi))<=...}.
\end{proof}

Now we define an operator on $\cC$ and we will prove that it is a contraction
and this will be essential in the proof of Theorem~$\ref{thm:global}$.

Define the operator $T \colon \cC \to \cC $ by
\begin{equation*}
   T(x, \phi)
   = \prts{J(x, \phi), L (x,\phi)}
   = \prts{J (x, \phi), L^+ (x,\phi), L^- (x, \phi)}.
\end{equation*}

\begin{lemma} \label{T:contraction}
   The operator $T \colon \cC \to \cC$ is a contraction.
\end{lemma}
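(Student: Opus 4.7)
The plan is to combine the two estimates from the preceding lemma,
$$ d'(J(x,\phi), J(y,\psi)) \le \sgm \prtsr{(1+N) d'(x,y) + M d(\phi,\psi)} $$
and
$$ d(L(x,\phi), L(y,\psi)) \le \omg \prtsr{(1+N) d'(x,y) + M d(\phi,\psi)}, $$
to bound the $\cC$-distance $d''(T(x,\phi),T(y,\psi)) = d'(J(x,\phi),J(y,\psi)) + d(L(x,\phi),L(y,\psi))$. Adding the two inequalities immediately gives
$$ d''\prts{T(x,\phi),T(y,\psi)} \le (\sgm+\omg) \prtsr{(1+N) d'(x,y) + M d(\phi,\psi)}. $$

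Next, I would substitute the explicit formulas $\sgm = \frac{M-1}{M(1+N)}$ and $\omg = \frac{N}{M(1+N)}$ supplied by Lemma~\ref{lema:M&N}, which give $\sgm+\omg = \frac{M-1+N}{M(1+N)}$. Bounding the right-hand side by $(\sgm+\omg)\maxs{1+N,M}\,d''((x,\phi),(y,\psi))$, the contraction constant becomes
$$ \theta := (\sgm+\omg) \maxs{1+N,M} = \maxs{\dfrac{M-1+N}{M},\ \dfrac{M-1+N}{1+N}}. $$

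The only thing that remains is to check that $\theta<1$, and this is where the bounds $M\in\,]1,2[$ and $N\in\,]0,1[$ guaranteed by Lemma~\ref{lema:M&N} do the work. Indeed, $\frac{M-1+N}{M} = 1 - \frac{1-N}{M} < 1$ since $N<1$, and $\frac{M-1+N}{1+N} = 1 - \frac{2-M}{1+N} < 1$ since $M<2$. This is the only subtle point of the argument; everything else is a mechanical combination of the previous two lemmas. Concluding that $d''(T(x,\phi),T(y,\psi)) \le \theta\, d''((x,\phi),(y,\psi))$ with $\theta<1$ establishes that $T$ is a contraction on the complete metric space $\cC$.
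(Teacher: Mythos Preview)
Your proof is correct and follows the same route as the paper: add the two estimates from the preceding lemma, bound by $(\sgm+\omg)\maxs{1+N,M}\,d''((x,\phi),(y,\psi))$, and check that this constant is below $1$. The only cosmetic difference is in that last check: the paper simply uses the hypothesis $\sgm+\omg<1/2$ together with $\maxs{1+N,M}<2$ (from $N<1$, $M<2$), whereas you substitute the explicit formula $\sgm+\omg=\frac{M-1+N}{M(1+N)}$ and verify each of the two resulting fractions is below $1$ --- both arguments ultimately rest on the same bounds $N<1$ and $M<2$ from Lemma~\ref{lema:M&N}.
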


\begin{proof}
   Let $(x,\phi), (y,\psi) \in \cC$. From last lemma it follows that
   \begin{align*}
      d''\prts{T(x,\phi),T(y,\psi)}
      & = d''\prts{(J(x,\phi),L(x, \phi)), ( J(y, \psi),L(y,\psi)}\\
      & = d'\prts{J(x,\phi),J(y, \psi)} + d \prts{L(x, \phi),L(y,\psi)}\\
      & \le (\sgm+\omg) \prts{(1+N) d'(x,y)  + M  d( \phi,\psi)}\\
      & \le \prts{\sgm + \omg} \maxs{1+N,M} d''((x,\phi),(y,\psi))
   \end{align*}
   and, since $\sgm+\omg<1/2$, $N<1$ and $M<2$, we obtain
   \begin{equation*}
      \prts{\sgm + \omg} \maxs{1+N,M} < 1
   \end{equation*}
   and this implies that $T$ is a contraction.
\end{proof}

Now we are going to prove Theorem~$\ref{thm:global}$.

\begin{proof}[Proof of Theorem~$\ref{thm:global}$]
   By Lemma~\ref{T:contraction} the operator $T$ is a contraction. Since $\cC$ is a complete metric space, by Banach fixed point theorem, there is a unique point $(x,\phi) \in \cC$ such that
      $$ T(x,\phi) = (x,\phi)$$
   and this fixed point verifies~\eqref{eq:x(t,s,xi)=Txi+integral},~\eqref{eq:phi1=integral de -oo a s} and~\eqref{eq:phi2=-integral de s a +oo}. By Lemma~\ref{lema da equival} we know that solve the last two equations is equivalent to solve~\eqref{eq:phi1= T eta+integral} and~\eqref{eq:phi2= T teta +integral}, if~\eqref{eq:x(t,s,xi)=Txi+integral} holds. Therefore, from~\eqref{eq:x(t)=T_t,sP_s...},~\eqref{eq:y^+(t)=T_t,sQ^+_s...} and~\eqref{eq:y^-(t)=T_t,sQ^-_s...}, we conclude the existence of the invariant manifold, i.e., the existence of a unique
   \begin{equation*}
     \phi=(\phi^+,\phi^-) \in \cX_N \ \text{ such that }\ \Psi_\tau ( \cV_\phi)\subseteq \cV_\phi
   \end{equation*}
   for every $\tau \in \R$. Moreover, for every $t \in \R$ and every $(s,\xi),  (s,\bxi) \in G$ we have
   \begin{align*}
      & \norm{\Psi_{t-s}(s,\xi,\phi(s,\xi)) - \Psi_{t-s}(s,\bxi,\phi(s, \bxi))}\\
      & = \norm{(t, x(t, s, \xi), \phi(t, x(t, s, \xi))) -
         (t, x(t, s, \bxi), \phi(t, x(t, s, \bxi)))} \\
      & \le \norm{x(t, s, \xi)- x(t, s, \bxi)} +
         \norm{\phi(t, x(t, s, \xi)) - \phi(t, x(t, s, \bxi))}\\
      & \le (1+N) \norm{x(t, s, \xi)- x(t, s, \bxi)} \\
      & \le M(1+N) \alpha_{t,s} \norm{\xi- \bxi}\\
      & = \dfrac{N}{\omg} \alpha_{t,s} \norm{\xi- \bxi}
   \end{align*}
   and this completes the proof.
\end{proof}
\bibliographystyle{elsart-num-sort}

\end{document}